\tikzset{vertex/.style={circle,fill,draw,scale=.2}}
\newtheorem{theorem}{Theorem}
\newtheorem{corollary}[theorem]{Corollary}
\newtheorem{definition}[theorem]{Definition}
\newtheorem{lemma}[theorem]{Lemma}
\newtheorem{proposition}[theorem]{Proposition}
\tikzstyle{vertex}=[circle, draw, fill=black, inner sep=0pt, minimum size=6pt]
\def\End{\mbox{End\/}}
\newcommand{\gr}{\operatorname{gr}}
\newcommand{\Ma}{\operatorname{\mathbb M}}
\newcommand{\HOM}{\operatorname{HOM}}
\begin{document}
\title[Structure theory of graded regular graded self-injective rings]{Structure theory of graded regular graded self-injective rings and applications}
\subjclass[2010]{16D50, 16D60.}
\keywords{Leavitt path algebras, bounded index of nilpotence, direct-finiteness, graded regular graded injective ring, graded type theory}

\author[R. Hazrat]{Roozbeh Hazrat}
\address{Centre for Research in Mathematics, Western Sydney University, AUSTRALIA}
\email{r.hazrat@westernsydney.edu.au}

\author[K.M. Rangaswamy]{Kulumani M. Rangaswamy}
\address{Departament of Mathematics, University of Colorado at Colorado Springs, Colorado-80918, USA}
\email{krangasw@uccs.edu}

\author[A.K. Srivastava]{Ashish K. Srivastava}
\address{Department of Mathematics and Statistics, St. Louis University, St.
Louis, MO-63103, USA} \email{ashish.srivastava@slu.edu}
\dedicatory{To S. K. Jain on his 80th birthday}
\thanks{The first author would like to acknowledge Australian Research Council grant  DP160101481. A part of this work was done at the University of M\"unster, where he was a Humboldt Fellow. The work of the third author is partially supported by a grant from Simons Foundation (grant number 426367).}
\maketitle

\begin{abstract}
In this paper, we develop structure theory for graded regular graded self-injective rings and apply it in the context of Leavitt path algebras. We show that for a finite graph, graded regular graded self-injective Leavitt path algebras are of graded type I and these are precisely graded $\Sigma$-$V$ Leavitt path algebras. 
\end{abstract}

\bigskip

\section{Introduction} 

\bigskip

 This is a semi-expository article illustrating how the type theory to study the structure of self-injective von Neumann regular  rings with identity can be extended to the case of  group graded rings. 
This is accomplished by adapting some of the ideas of Goodearl \cite{G} to the graded situation. As an application, we discuss the type theory of unital Leavitt path algebras over a field. 

Kaplansky developed a classification for Baer rings in \cite{Kap2}. Since (von Neumann) regular right self-injective rings are Baer rings, this classification theory of Baer rings applies to them. Let $R$ be a regular right self-injective ring. Then the ring $R$ is said to be of Type $I$ provided it contains a faithful abelian idempotent. The ring $R$ is said to be of Type $II$ provided $R$ contains a faithful directly finite idempotent but $R$ contains no nonzero abelian idempotents and the ring $R$ is said to be of Type $III$ if it contains no nonzero directly finite idempotents.

A regular right self-injective ring $R$ is said to be of (i) Type $I_{f}$ if $R$ is of Type $I$ and is directly-finite, (ii) Type $I_{\infty }$ if $R$ is of Type $I$
and is purely infinite (that is, $R_R\cong R_R\oplus R_R$), (iii) Type $II_{f}$ if $R$ is of Type $II$ and is directly-finite, (iv) Type $II_{\infty }$ if $R$ is of Type $II$ and is purely infinite.

It is known that \cite[Theorem 10.13]{G}, if $R$ is a regular right self-injective ring, then there is a unique decomposition $R=R_1\times R_2 \times R_3$, where  $R_1$ is of Type $I$,  $R_2$ is of Type $II$, and  $R_3$ is of Type $III$. This is a special case of Kaplansky's decomposition theorem for Baer rings~\cite[Theorem~11]{Kap2} as developed by Goodearl in the setting of regular self-injective rings. 

Our motivation here to develop this theory in the graded setting was the result of \cite{H-2} which shows that all Leavitt path algebras are graded regular and \cite{HR} where the graded self-injective Leavitt path algebras were studied.  Thus one would like to know how these algebras decompose to different \emph{graded} types.


In this paper we first obtain a graded version of the decomposition of a regular ring into different types, by suitably modifying the arguments from \cite{G}. One requires to take care of grading along the way and this forces some subtle changes to the details of the proofs from nongraded version to the graded setting (see the introduction to Section \S2). We then finish the paper by applying these results in the context of Leavitt path algebras. In particular, we provide a characterization for when Leavitt path algebra over a finite graph $E$ is of graded Type I (which immediately shows they are in fact of the Type $I_f$).  

Unless otherwise stated all the rings that we consider possess a multiplicative identity and all the modules are (unitary) right modules. Also, by a self-injective ring, we mean a right self-injective ring.

\section{Structure theory of graded regular graded self-injective rings}\label{sec2}

 The aim of this section is to develop a structure theory for graded von Neumann regular
graded self-injective rings. The class of von Neumann regular rings (regular for short) constitutes a large class  and from the categorical view point they are a natural generalization of the basic building blocks of ring theory, i.e., simple rings. For an associative ring with identity if all modules are free or projective, then the ring is a division ring or a semi-simple ring, respectively. However if all modules are flat then the ring is regular. There is an element-wise definition for such rings; any element has an ``inner inverse'', i.e., for an element $a$, there is $b$ such that $aba=a$. Such rings have very rich structures, and Ken Goodearl has devoted an entire book on them~\cite{G}. In the case of regular rings which are self injective, there is a structure theory developed which shows that the ring can be decomposed uniquely into three types based on the behavior of idempotent elements ~\cite[\S10]{G}.

For the class of graded rings, one can define the notion of graded von Neumann regular rings (graded regular for short) as rings in which any homogeneous element has an inner inverse. There are many interesting class of rings which are not regular but are graded regular. One such class is the Leavitt path algebras~\cite{H-2}. 

In order to carry over the structure theory of regular self injective rings from nongraded setting to the graded case one needs to carefully analyze the behavior of the homogeneous components throughout the proofs, as in the graded setting, suspensions of the modules would come into consideration. A prototype example of regular self injective rings is the ring of column finite matrices over a division ring. This would not readily generalize to the graded setting; the column finite matrices over a graded division rings is not necessarily graded. Throughout this section our rings have identity element. 

We begin with some basics of graded rings and graded modules that we will need throughout this paper. Let $\Gamma$ be an additive abelian group and $R$ a unital  $\Gamma$-graded ring. Namely, $R=\bigoplus_{\gamma \in \Gamma} R_\gamma$, where $R_\gamma$ are additive subgroups of $R$ and $R_\gamma R_\beta \subseteq R_{\gamma+\beta}$, for $\gamma,\beta \in \Gamma$. We call the elements of components $R_\gamma$, $\gamma \in \Gamma$, homogenous elements. We refer the reader to \cite{H,NvO} for the basics on graded rings. Let $M$ and $N$ be graded right $R$-modules.
Consider the hom-group $\operatorname{Hom}_{R}(M,N)$. One can
show that if $M$ is finitely generated or $\Gamma$ is a finite group, then
\[
\operatorname{Hom}_{R}(M,N)=\bigoplus_{\gamma\in\Gamma} \operatorname{Hom}%
(M,N)_{\gamma},
\]
where $\operatorname{Hom}(M,N)_{\gamma}=\{f:M\rightarrow N : f(M_{\alpha
})\subseteq N_{\alpha+\gamma}, \alpha\in\Gamma\}$ (see \cite[\S1.2.3]{H}).
However if $M$ is not finitely generated, then throughout we will work with
the group
\[
\operatorname{HOM}_{R}(M,N):=\bigoplus_{\gamma\in\Gamma} \operatorname{Hom}%
(M,N)_{\gamma},
\]
and consequently the $\Gamma$-graded ring
\begin{equation}\label{hhh}
\operatorname{END}_{R}(M):=\operatorname{HOM}_{R}(M,M).
\end{equation}

If $f\in\operatorname{END}_{R}(M)$ then $f=\sum_{\gamma\in\Gamma} f_{\gamma}$,
where $f_{\gamma}\in\operatorname{Hom}(M,M)_{\gamma}$ and we write
$\operatorname{supp}(f)=\{\gamma\in\Gamma : f_{\gamma}\not =0\}$. This is one point of departure from nongraded setting where we should be working with $\End_R(M)$. 

\label{matrixGrading}Let $R$ be a
$\Gamma$-graded ring and $(\delta_{1},\cdot\cdot\cdot,\delta_{n})$ an
$n$-tuple where $\delta_{i}\in\Gamma$. Then $\Ma_{n}(R)$ is a $\Gamma$-graded
ring and, for each $\lambda\in\Gamma$, its $\lambda$-homogeneous component
consists of $n\times n$ matrices%

\begin{equation}\label{one}
\Ma_{n}(R)(\delta_{1},\cdot\cdot\cdot,\delta_{n})_{\lambda}=\left(
\begin{array}
[c]{cccccc}%
R_{\lambda+\delta_{1}-\delta_{1}} & R_{\lambda+\delta_{2}-\delta_{1}} & \cdot
& \cdot & \cdot & R_{\lambda+\delta_{n}-\delta_{1}}\\
R_{\lambda+\delta_{1}-\delta_{2}} & R_{\lambda+\delta_{2}-\delta_{2}} &  &  &
& R_{\lambda+\delta_{n}-\delta_{2}}\\
&  &  &  &  & \\
&  &  &  &  & \\
&  &  &  &  & \\
R_{\lambda+\delta_{1}-\delta_{n}} & R_{\lambda+\delta_{2}-\delta_{n}} &  &  &
& R_{\lambda+\delta_{n}-\delta_{n}}%
\end{array}
\right).
\end{equation}

This shows that for each homogeneous element $x\in R$,
\begin{equation}\label{two}
\deg(e_{ij}(x))=\deg(x)+\delta_{i}-\delta_{j}
\end{equation}
where $e_{ij}(x)$ is a matrix with $x$ in the $ij$-position and with every
other entry $0$.

Let $R$ be a $\Gamma$-graded ring and $A$, a graded right
$R$-module. Recall that for any $\alpha \in \Gamma$, the $\alpha$-\textit{suspension} of the module $A$ is defined as $A(\alpha
)=\bigoplus_{\gamma\in\Gamma}A(\alpha)_{\gamma}$, where $A(\alpha)_{\gamma
}=A_{\alpha+\gamma}$.

\begin{definition}\label{jjuuii}
\label{fgfhr} Let $R$ be a $\Gamma$-graded ring and $A$, a graded right
$R$-module. We say $A$ is \textit{graded directly-finite} if 
$A\cong_{\gr} A(\alpha) \oplus C$, for some $\alpha\in\Gamma$, then
$C=0$.
\end{definition}

\noindent A ring $R$ with identity $1$ is said to be \textit{directly-finite} if for any
two elements $x,y\in R$, $xy=1$ implies $yx=1$. 
%

A $\Gamma$-graded ring $R$ with identity is called \textit{graded directly-finite} if for any two homogeneous elements $x, y\in R^h$  we have that $xy=1$ implies $yx=1$. Clearly here if $x\in R_{\alpha}$ then $y\in R_{-\alpha}$. 

\begin{proposition} \cite[Proposition 3.2]{HRS-1}
Let $R$ be a $\Gamma$-graded ring and $A$ a graded right
$R$-module. Then $A$ is a graded
directly-finite $R$-module if and only if $\operatorname{END}_{R}(A)$ is a
graded directly-finite ring.
\end{proposition}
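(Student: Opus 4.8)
The plan is to mirror the ungraded equivalence between direct-finiteness of a module and of its endomorphism ring, the extra work being to keep track of suspensions. The technical device underlying everything is the identification, for each $\gamma\in\Gamma$, of the homogeneous component $\operatorname{Hom}(A,A)_{\gamma}$ with the degree-zero graded morphisms $A\to A(\gamma)$, equivalently $A(-\gamma)\to A$. Under this identification a homogeneous endomorphism of degree $\gamma$ becomes an honest graded map between suspensions of $A$, so that images and kernels are graded submodules and split monomorphisms split off graded direct summands. I would record at the outset that the unit of $\operatorname{END}_R(A)$ is $\mathrm{id}_A\in\operatorname{Hom}(A,A)_0$, and that if $x\in\operatorname{Hom}(A,A)_{\alpha}$ and $xy=\mathrm{id}_A$ with $y$ homogeneous, then necessarily $y\in\operatorname{Hom}(A,A)_{-\alpha}$.

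For the forward direction, assume $A$ is graded directly-finite and take homogeneous $f,g$ with $fg=1$, say $\deg f=\gamma$ and hence $\deg g=-\gamma$. Viewing $g\colon A\to A(-\gamma)$ and $f\colon A(-\gamma)\to A$ as degree-zero graded maps with $fg=\mathrm{id}_A$, the map $g$ is a split graded monomorphism, so $A(-\gamma)\cong_{\gr}\Im(g)\oplus\Ker(f)$ with $g$ inducing a graded isomorphism $A\cong_{\gr}\Im(g)$. Applying the suspension functor $(-)(\gamma)$ yields $A\cong_{\gr}A(\gamma)\oplus(\Ker f)(\gamma)$. Graded direct-finiteness of $A$ (taken with $\alpha=\gamma$) forces $(\Ker f)(\gamma)=0$, hence $\Ker f=0$; thus $f$ is a graded isomorphism, its inverse is $g$, and therefore $gf=\mathrm{id}_A$, i.e. $gf=1$. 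This shows $\operatorname{END}_R(A)$ is graded directly-finite.

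For the converse, assume $\operatorname{END}_R(A)$ is graded directly-finite and suppose $\phi\colon A\to A(\alpha)\oplus C$ is a graded isomorphism for some $\alpha\in\Gamma$. Let $\iota\colon A(\alpha)\to A(\alpha)\oplus C$ and $\pi\colon A(\alpha)\oplus C\to A(\alpha)$ be the canonical degree-zero graded inclusion and projection, and let $s\colon A(\alpha)\to A$ be the underlying-identity map, a homogeneous bijection of degree $\alpha$ with inverse $s^{-1}$ of degree $-\alpha$. I would then set $x=s\pi\phi$ and $y=\phi^{-1}\iota s^{-1}$, homogeneous endomorphisms of $A$ of degrees $\alpha$ and $-\alpha$ respectively, and compute $xy=s(\pi\iota)s^{-1}=ss^{-1}=\mathrm{id}_A$ using $\pi\iota=\mathrm{id}_{A(\alpha)}$. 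By hypothesis $yx=1$ as well; but $yx=\phi^{-1}(\iota\pi)\phi$, so $yx=\mathrm{id}_A$ is equivalent to $\iota\pi=\mathrm{id}_{A(\alpha)\oplus C}$, and since $\iota\pi$ is the projection onto the first summand this forces $C=0$. Hence $A$ is graded directly-finite.

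The main obstacle, and the only place the argument genuinely departs from the classical one, is the correct handling of the suspensions: one must verify that the maps $g,f,x,y$ built from $\phi$, the inclusions and projections, and the underlying-identity maps $s^{\pm1}$ carry the degrees claimed, and that the module condition ``$A\cong_{\gr}A(\alpha)\oplus C\Rightarrow C=0$'' matches exactly the relation $A\cong_{\gr}A(\gamma)\oplus(\Ker f)(\gamma)$ produced in the forward direction. Once the dictionary between homogeneous endomorphisms of degree $\gamma$ and graded morphisms $A\to A(\gamma)$ is set up and checked, the remainder is the familiar split-idempotent bookkeeping.
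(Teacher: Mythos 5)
Your proof is correct: both directions check out, with the degree bookkeeping done accurately --- the identification of $\operatorname{Hom}(A,A)_{\gamma}$ with degree-zero graded maps $A\to A(\gamma)$, the splitting $A(-\gamma)\cong_{\operatorname{gr}}A\oplus\operatorname{Ker}f$ suspended to match Definition~\ref{jjuuii}, and the maps $x=s\pi\phi$, $y=\phi^{-1}\iota s^{-1}$ of degrees $\alpha$ and $-\alpha$ all behave as you claim. The paper states this proposition with only a citation to \cite[Proposition 3.2]{HRS-1} and includes no proof of its own, but your argument is precisely the standard suspension-tracking version of the classical module/endomorphism-ring equivalence that the cited reference establishes, so there is nothing further to flag.
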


In the structure theory of self-injective rings various types of idempotents play important roles. We start with giving the graded version of these definitions. 

\begin{definition}
Let $R=\bigoplus_{\gamma\in\Gamma}R_{\gamma}$ be a $\Gamma$-graded ring.
Denote by $I^{\operatorname{gr}}(R)$ the set of homogeneous idempotents and by
$B^{\operatorname{gr}}(R)$ the set of central homogeneous idempotents.

\begin{enumerate}[\upshape(1)]
\item We say $e\in I^{\operatorname{gr}}(R)$ is \textit{graded abelian
idempotent} if $I^{\operatorname{gr}}(eRe)=B^{\operatorname{gr}}(eRe)$. We say
$R$ is \textit{graded abelian} if $1$ is a graded abelian idempotent, i.e.,
$I^{\operatorname{gr}}(R)=B^{\operatorname{gr}}(R)$.

\medskip

\item We say $e\in I^{\operatorname{gr}}(R)$ is \textit{graded directly-finite} if $eRe$ is a graded directly-finite ring, i.e., if $x,y\in (eRe)^{h}$
such that $xy=e$ then $yx=e$. Equivalently, $eR$ is not graded isomorphic to a proper graded direct summand of $eR$. 

\medskip

\item We say $e\in I^{\operatorname{gr}}(R)$ is \textit{graded faithful} if $y\in
B^{\operatorname{gr}}(R)$ such that $ey=0$, then $y=0$.
\end{enumerate}
\end{definition}

\begin{definition}
Let $R$ be a $\Gamma$-graded von Neumann regular, right self-injective ring.

\begin{enumerate} [\upshape(1)]
\item We say $R$ is of \textit{gr-Type I}, if $R$ contains a graded faithful
abelian idempotent.

\medskip

\item We say $R$ is of \textit{gr-Type II}, if $R$ contains a graded faithful
directly finite idempotent but contains no nonzero graded abelian idempotent.

\medskip

\item We say $R$ is of \textit{gr-Type III}, if $R$ contains no nonzero graded
directly finite idempotent.
\end{enumerate}
\end{definition}

Recall that a $\Gamma$-graded ring $R$ is \emph{strongly graded} if $R_\alpha R_\beta=R_{\alpha + \beta}$, for every $\alpha,\beta \in \Gamma$ (see~\cite{H}).

\begin{lemma}
\label{zerocomponent} If $R$ is a $\Gamma$-graded regular self-injective ring then
$R_{0}$ is a regular self-injective ring. Furthermore if $R$ is strongly
graded, the converse also holds.
\end{lemma}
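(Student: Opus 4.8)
\emph{Forward direction.} I would split the claim into its regularity part and its self-injectivity part. The regularity of $R_0$ is a direct degree count: given $a\in R_0$, graded regularity of $R$ supplies a homogeneous $b$ with $aba=a$, and comparing degrees forces $\deg(b)=\deg(aba)-2\deg(a)=0$, so $b\in R_0$ and $R_0$ is regular. For the self-injectivity of $R_0$ I would argue categorically rather than by hand. Consider the induction functor $-\otimes_{R_0}R$ from $\mathrm{Mod}$-$R_0$ to the category gr-$R$ of graded right $R$-modules, together with its right adjoint, the degree-zero functor $(-)_0\colon \text{gr-}R\to\mathrm{Mod}$-$R_0$; the adjunction $\Hom_{\text{gr-}R}(N\otimes_{R_0}R,M)\cong\Hom_{R_0}(N,M_0)$ is routine to verify since a graded map out of $N\otimes_{R_0}R$ is determined by its restriction to the degree-zero part $N\otimes_{R_0}R_0=N$. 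Since $R_0$ is regular, every left $R_0$-module is flat; in particular ${}_{R_0}R$ is flat, so induction is exact. A right adjoint of an exact functor preserves injective objects, hence $(-)_0$ carries the graded self-injective object $R$ to the injective $R_0$-module $(R)_0=R_0$. Thus $R_0$ is regular self-injective.

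\emph{Converse.} Assume $R$ is strongly graded and $R_0$ is regular self-injective. Here the natural tool is Dade's theorem: for a strongly graded ring the pair $\big((-)_0,\ -\otimes_{R_0}R\big)$ is an equivalence $\text{gr-}R\simeq\mathrm{Mod}$-$R_0$, under which the graded module $R$ corresponds to the $R_0$-module $R_0$. Because an equivalence of categories preserves injective objects, injectivity of $R_0$ in $\mathrm{Mod}$-$R_0$ transports to injectivity of $R$ in gr-$R$, i.e.\ $R$ is graded self-injective. For graded regularity I would use the ideal criterion: $R$ is graded regular iff every principal graded right ideal $aR$ with $a\in R_{\gamma}$ homogeneous is a graded direct summand of $R$. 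Under Dade's equivalence $aR$ corresponds to its degree-zero part $aR_{-\gamma}$, a right ideal of $R_0$; since in the strongly graded case $R_{-\gamma}$ is a finitely generated (indeed invertible) $R_0$-bimodule, $aR_{-\gamma}$ is finitely generated, hence a direct summand of $R_0$ by regularity of $R_0$. Transporting this summand back shows $aR$ is a graded direct summand, so $a$ admits a homogeneous inner inverse and $R$ is graded regular.

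\emph{Main obstacle.} The regularity bookkeeping is routine; the substance lies in transferring injectivity. In the forward direction the crux is recognizing that regularity of $R_0$ is exactly what makes ${}_{R_0}R$ flat, hence induction exact, which is the precise hypothesis allowing the right adjoint $(-)_0$ to preserve injectives---without flatness this step can fail. In the converse the delicate point is that regularity only guarantees \emph{finitely generated} right ideals are direct summands, so one genuinely needs the strongly-graded hypothesis to know each $R_{\gamma}$ is finitely generated over $R_0$, ensuring $aR_{-\gamma}$ remains finitely generated. This is exactly where ``strongly graded'' enters and why the converse is not expected to survive for arbitrary gradings.
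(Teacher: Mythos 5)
Your proof is correct and follows essentially the same route as the paper: your adjunction argument in the forward direction (induction $-\otimes_{R_0}R$ is exact because $R$ is flat over the regular ring $R_0$, so the right adjoint $(-)_0$ preserves injectives) is precisely the functorial packaging of the paper's hand computation, which tensors a map $I\rightarrow R_0$ up to $R$, extends it using graded self-injectivity, and takes the degree-zero component, while the converse rests on Dade's theorem in both treatments. If anything you supply more detail than the paper in the converse, spelling out graded regularity of $R$ via the principal graded right ideal criterion and the finite generation (invertibility) of the components $R_\gamma$ over $R_0$ --- a step the paper leaves implicit in its remark that ``any diagram can be completed via going to $\operatorname{Mod-}R_0$.''
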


\begin{proof}
If $R$ is graded regular, clearly $R_{0}$ is regular and consequently $R$ is
flat over $R_{0}$. Let $I$ be a right ideal of $R_{0}$. Tensoring any map $f:I
\rightarrow R_{0}$ with $R$, thanks to $R$ being graded self-injective,
$f\otimes1$ lifts to a graded homomorphism $\overline f:R\rightarrow R$ and 
its zero component $\overline f_{0}\in\operatorname{Hom}(R_{0},R_{0})$
extends $f$. Thus $R_{0}$ is self-injective as well.

If $R$ is strongly graded, by Dade's theorem ${\operatorname{{Gr}^{}-}} R$ is
Morita equivalent to $\operatorname{Mod-} R_{0}$ (\cite[\S 1.5]{H}). Thus any diagram can be
completed via going to $\operatorname{Mod-} R_{0}$.
\end{proof}

\begin{lemma}
If a graded regular self-injective ring $R$ has graded type I (type II), then $R_0$ also has type I (type II).
\end{lemma}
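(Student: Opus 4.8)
The plan is to push everything about $R$ down to $R_0$ through one elementary observation: a nonzero homogeneous idempotent lies in degree $0$, since $f=f^2\in R_\gamma$ forces $f\in R_{2\gamma}$, hence $\gamma=2\gamma$ and $\gamma=0$. Thus $I^{\gr}(R)=I(R_0)$, the full set of idempotents of $R_0$, and by Lemma \ref{zerocomponent} the ring $R_0$ is regular self-injective, so Goodearl's decomposition $R_0=S_1\times S_2\times S_3$ into Types I, II, III \cite[Theorem 10.13]{G} is available. The proof then becomes a matter of matching the graded idempotent conditions on $R$ with the ungraded ones on $R_0$ and using this to annihilate the unwanted factors $S_i$.

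The decisive step is the equivalence: for $f\in I^{\gr}(R)=I(R_0)$, $f$ is \emph{graded abelian in $R$} iff $f$ is \emph{abelian in $R_0$}. Since $(fRf)_0=fR_0f$, one has $I^{\gr}(fRf)=I(fR_0f)$, and the forward implication is immediate (centrality in $fRf$ is stronger than centrality in $fR_0f$). For the converse I would argue as follows. If $f$ is abelian in $R_0$ then $fR_0f$ is abelian regular, hence reduced. Take $g\in I(fR_0f)$ and a homogeneous $x\in fR_\gamma f$, set $y=gx(f-g)$, and use graded regularity to pick a homogeneous reflexive inner inverse $y'\in(f-g)R_{-\gamma}g$. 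Then $yy'\le g$ and $y'y\le f-g$ are equivalent idempotents of the reduced ring $fR_0f$; in a reduced ring equivalent idempotents coincide, and two orthogonal equal idempotents vanish, so $y=yy'y=0$. Hence $gx(f-g)=0$ and symmetrically $(f-g)xg=0$, giving $gx=gxg=xg$ for all homogeneous $x$, i.e. $g$ is central in $fRf$. Thus $f$ is graded abelian. In particular $R$ has a nonzero graded abelian idempotent iff $R_0$ does, so the \textit{no abelian idempotent} clause transfers verbatim in both directions.

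For Type I, let $e$ be a graded faithful abelian idempotent of $R$; by the equivalence $e$ is abelian in $R_0$, but in general it is \emph{not} faithful there (e.g. $e_{11}$ in a suitably $\mathbb{Z}$-graded $M_2(K)$), so $e$ cannot simply be reused. Instead I show $S_2=S_3=0$ by proving that every nonzero $h\in B(R_0)$ has $hR_0h$ containing a nonzero abelian idempotent; since corners of a Type II or Type III ring contain no abelian idempotent, applying this to the central idempotent of $S_2$, respectively $S_3$, forces each to vanish. To produce the abelian idempotent I would invoke graded comparability (the graded analogue of Goodearl's general comparability, which is the core adaptation of this section), applied to the homogeneous idempotents $h,e$ in the corner $hRh$, again graded regular self-injective: there is a central homogeneous idempotent $c$ with $chR\precsim_{\gr}ceR$ and $(1-c)eR\precsim_{\gr}(1-c)hR$. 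If $ch\ne0$ then $ch$ is graded isomorphic, up to suspension, to a subidempotent of the graded abelian $e$, hence graded abelian and so abelian in $R_0$, with $ch\le h$; if $ch=0$ then graded faithfulness of $e$ forces $(1-c)e\ne0$, and the second relation yields $0\ne h_1\le h$ with $h_1R\cong_{\gr}(1-c)e\,R$ up to suspension, again graded abelian. Either way $hR_0h$ has a nonzero abelian idempotent, so $R_0=S_1$ is of Type I.

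The Type II case runs parallel. Now $e$ is a graded faithful directly-finite idempotent; by the equivalence $R_0$ has no abelian idempotent, so $S_1=0$ (a nonzero $S_1$ would contribute one). It remains to kill $S_3$. Using that graded-direct-finiteness passes to graded direct summands, hence to subidempotents (an immediate graded analogue of the module argument behind \cite[Proposition 3.2]{HRS-1}), and that a graded directly-finite degree-$0$ idempotent is directly-finite in $R_0$ (specialize the defining condition to degree $0$), the same comparability argument produces inside any nonzero $hR_0h$ a nonzero directly-finite idempotent of $R_0$; since the corners of a Type III ring have none, $S_3=0$ and $R_0=S_2$ is of Type II. The main obstacle in both cases is the graded comparability input, compounded by the observed failure of $e$ to remain faithful in $R_0$; this is precisely why $e$ cannot be restricted directly and one must instead argue corner-by-corner against the Goodearl decomposition of $R_0$.
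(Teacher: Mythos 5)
Your proposal has a fatal gap at its self-declared decisive step: the claimed equivalence ``$f$ is graded abelian in $R$ iff $f$ is abelian in $R_0$'' is false in the converse direction, even for graded regular graded self-injective rings. Take $\Gamma=\mathbb{Z}$ and $R=\Ma_2(K)(0,1)$ with the grading (\ref{one}), so that $R_0$ is the diagonal $\cong K\times K$ and $e_{12},e_{21}$ are homogeneous of degrees $\mp 1$; this $R$ is graded semisimple, hence graded regular and graded self-injective. Then $f=1$ is abelian in $R_0\cong K\times K$, but the homogeneous idempotent $g=e_{11}$ is not central in $fRf=R$, since $ge_{12}=e_{12}\neq 0=e_{12}g$. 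Your reduced-ring argument breaks exactly here: with $x=e_{12}$ one gets $y=gx(f-g)=e_{12}\neq 0$, $yy'=e_{11}$, $y'y=e_{22}$. These are indeed orthogonal idempotents of the reduced ring $fR_0f$, but they are equivalent only via the witnesses $y\in R_{\gamma}$, $y'\in R_{-\gamma}$ with $\gamma\neq 0$, which do \emph{not} lie in $fR_0f$; the fact that equivalent idempotents in an abelian (reduced) ring coincide presupposes the witnessing elements lie in that ring, so it is inapplicable, and indeed $e_{11}\neq e_{22}$. Since your Type II argument uses precisely this false direction (you deduce ``$R_0$ has no nonzero abelian idempotents'' from ``$R$ has no nonzero graded abelian idempotents'' by its contrapositive), the Type II half of the proposal collapses; whether that implication survives under the extra hypothesis that $R$ is of graded Type II would require a genuinely different argument, which you do not supply.

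There is a second, independent gap: your Type I argument hinges on ``graded general comparability,'' which is neither proved in your proposal nor available in the paper --- Section \ref{sec2} contains no graded analogue of Goodearl's general comparability, and (as your own ``up to suspension'' hedging signals) even its correct graded formulation requires suspensions and a nontrivial proof, essentially a graded redo of Goodearl's Chapter 9. The paper's actual proof needs none of this machinery. Because $R$ is graded regular, every graded right ideal is generated by homogeneous idempotents, which (as you correctly observe) are forced into degree zero; hence $I=I_0R$ for every graded right ideal $I$, and $A\mapsto AR$ is a bijection between right ideals of $R_0$ and graded right ideals of $R$ with $(AR)_0=A$. Given a nonzero right ideal $A$ of $R_0$, graded Type I (resp.\ II) together with Theorem \ref{rgagaga} (resp.\ Theorem \ref{rgagaga1}) produces a nonzero graded abelian (resp.\ graded directly finite) idempotent $e\in AR$; being homogeneous, $e\in (AR)_0=A$, and since $(eRe)_0=eR_0e$, $e$ is abelian (resp.\ directly finite) in $R_0$ --- this uses only the easy direction of your equivalence, the one that is true. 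One then concludes by Goodearl's Propositions 10.4(b) and 10.8(b) applied to $R_0$, which is regular self-injective by Lemma \ref{zerocomponent}. So where the paper pushes ideals down through the idempotent-degree-zero observation you yourself made, your proposal instead routes through a false lemma and an unproven comparability theorem.
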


\begin{proof}
Suppose $R$ is a $\Gamma$-graded regular self-injective ring. There is a bijection between the right (left) ideals of $R_0$ and graded right (left) ideals of $R$ which sends any right ideal $A$ of $R_0$ to the graded right ideal $AR$. Moreover $(AR)_0 = A$. From this and \cite[Propositions 10.4 (b) and 10.8 (b)]{G}, we can conclude that if $R$ has graded type I (type II), then $R_0$ also has type I (type II).
\end{proof}

\begin{theorem}
Let $R$ be a $\Gamma$-graded regular ring and $A$ be a graded projective right
$R$-module. Then any graded finitely generated submodule $B$ of $A$ is a
direct summand of $A$. In particular $B$ is graded projective.
\end{theorem}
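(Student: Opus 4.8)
The plan is to prove the graded analogue of the classical theorem that over a von Neumann regular ring every finitely generated submodule of a projective module is a direct summand, taking care at every step that the idempotents produced are homogeneous and that the decompositions respect the grading (including the suspensions that appear in graded free modules). Everything rests on one elementary observation. If $a \in R$ is homogeneous of degree $\gamma$, choose an inner inverse $b$ with $aba = a$; replacing $b$ by its degree $-\gamma$ component (which still satisfies $aba=a$) we may assume $b \in R_{-\gamma}$, so that $e = ab$ is a homogeneous idempotent of degree $0$ with $aR = eR$. Thus every cyclic graded right ideal is generated by a homogeneous idempotent of degree $0$ and is a graded direct summand of $R$. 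Conversely, if $R = I \oplus J$ is a decomposition into graded right ideals, then writing $1 = x + y$ with $x \in I$, $y \in J$ and using that $1 \in R_0 = I_0 \oplus J_0$, uniqueness forces $x$ to be homogeneous of degree $0$; hence $I = xR$ for a homogeneous idempotent $x \in R_0$.

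First I would treat the base case: every finitely generated graded right ideal is a graded direct summand of $R$, by induction on the number $m$ of homogeneous generators. The case $m = 1$ is the observation above. For the step, write the ideal as $eR + aR$, where by induction $eR$ is the (graded summand) ideal generated by the first $m-1$ generators with $e \in R_0$ a homogeneous idempotent, and $a$ is homogeneous. Since $eR + aR = eR + (1-e)aR$, put $c = (1-e)a$, which is homogeneous and lies in $(1-e)R$, and use the observation to write $cR = fR$ with $f \in R_0$ a homogeneous idempotent; then $ec = 0$ forces $ef = 0$, so $eR + fR = eR \oplus fR$ and $fR \subseteq (1-e)R$. Applying the modular law to $fR \subseteq (1-e)R$ inside $R = fR \oplus (1-f)R$ gives $(1-e)R = fR \oplus \big((1-e)R \cap (1-f)R\big)$, and hence $R = eR \oplus fR \oplus \big((1-e)R \cap (1-f)R\big)$. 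This exhibits the ideal $eR \oplus fR$ as a graded direct summand of $R$.

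For the general statement, write the graded projective module $A$ as a graded direct summand of a graded free module $F = \bigoplus_i R(\delta_i)$. By the (graded) modular law, once $B$ is shown to be a graded direct summand of $F$ it is automatically a graded direct summand of $A$ (intersect a complement of $B$ in $F$ with $A$). Since $B$ is finitely generated by homogeneous elements, each of finite support, $B$ is contained in a finite subsum $F_0 = R(\delta_1) \oplus \cdots \oplus R(\delta_n)$, which is a graded direct summand of $F$; so it suffices to show $B$ is a graded direct summand of $F_0$, which I would do by induction on $n$. For $n = 1$, $B$ is a finitely generated graded submodule of the suspension $R(\delta_1)$, i.e. a finitely generated graded right ideal, covered by the base case. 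For $n > 1$, let $\pi \colon F_0 \to R(\delta_n)$ be the projection onto the last summand, a homogeneous map of degree $0$ with $\ker \pi = R(\delta_1) \oplus \cdots \oplus R(\delta_{n-1})$. Then $\pi(B)$ is a finitely generated graded submodule of $R(\delta_n)$, hence a graded direct summand by the base case and therefore graded projective; consequently the graded exact sequence $0 \to B \cap \ker \pi \to B \to \pi(B) \to 0$ splits in the graded category, so $B = (B \cap \ker \pi) \oplus B'$ and $B \cap \ker \pi$ is finitely generated. By the inductive hypothesis $B \cap \ker \pi$ is a graded direct summand of $\ker \pi$, and assembling the two splittings through $\pi$---exactly as in the ungraded argument, all of the maps and idempotents involved being homogeneous of degree $0$---shows that $B$ is a graded direct summand of $F_0$. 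The final assertion that $B$ is graded projective is then immediate.

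I expect the only genuine obstacle to be the grading bookkeeping rather than any new algebra: one must check repeatedly that the idempotents can be chosen in $R_0$, that the projection $\pi$ and all splittings are homogeneous of degree $0$, and that the suspensions $R(\delta_i)$ do not interfere with the decompositions. The one place where the ungraded proof typically orthogonalizes idempotents by hand is handled here more cleanly by the modular law, which sidesteps the degree-tracking that explicit orthogonalization would require.
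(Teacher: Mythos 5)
Your proof is correct, but it follows a genuinely different route from the paper's. The paper proves the result in one stroke, without induction: it places $B$ inside a finitely generated graded free module $H$ whose basis elements $h_i$ are chosen with the \emph{same degrees} $\alpha_i$ as the homogeneous generators $b_i$ of $B$, so that $h_i\mapsto b_i$ defines a single endomorphism $f\in\operatorname{End}_R(H)_0$ with $f(H)=B$; it then invokes the fact that $\operatorname{End}_R(H)$ --- a matrix ring $\Ma_n(R)(\alpha_1,\dots,\alpha_n)$ with shifted grading --- is itself graded regular, producing $g$ of degree zero with $fgf=f$, so that $fg$ is a homogeneous idempotent with image $B$ and $H=B\oplus(1-fg)(H)$; finally it descends from $H$ to $A$ by two applications of the same modular-law fact you use ($N\leq M\leq F$ with $N,M$ summands of $F$ implies $N$ a summand of $M$). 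You instead avoid any appeal to graded regularity of endomorphism or matrix rings and rebuild the graded analogues of Goodearl's Theorems 1.1 and 1.11 from the element-wise definition alone: your opening observation (that the degree $-\gamma$ component of an inner inverse of a homogeneous $a$ is again an inner inverse, yielding $aR=eR$ with $e\in R_0$ idempotent) is exactly the right normalization, your induction on generators with $c=(1-e)a$ and the modular law correctly handles the orthogonalization ($ef=0$ since $f\in cR$), and your induction on the number of summands $R(\delta_i)$ with the degree-zero projection $\pi$ and the graded splitting of $0\to B\cap\ker\pi\to B\to\pi(B)\to 0$ assembles correctly. The trade-off: the paper's argument is shorter and concentrates all the suspension bookkeeping in the single choice of basis degrees, but it silently relies on the nontrivial input that endomorphism rings of finitely generated graded free modules over graded regular rings are graded regular; your double induction is longer but entirely self-contained, needing only that homogeneous elements admit inner inverses, and your modular-law substitute for explicit idempotent orthogonalization is a clean way to dodge degree-tracking. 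Both proofs hinge on the same essential graded insight --- arranging matters so every idempotent and every splitting is homogeneous of degree zero, so no suspensions of $B$ intervene.
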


\begin{proof}
Since $A$ is graded projective, there is a graded module $C$ such that
$A\oplus C\cong_{\operatorname{gr}}F$, where $F \cong_{\operatorname{gr}}
\bigoplus_{i \in I} R(\alpha_{i})$, is a graded free $R$-module. Consider $A$
as a graded submodule of $F$. Since $B\leq^{\operatorname{gr}} A$ is finitely
generated, there is a graded finitely generated free $R$-module $G$ which is a
direct summand of $F$ and contains $B$.

Suppose $B$ is generated by $n$ elements $b_{i}$ of degree $\alpha_{i}$,
$1\leq i \leq n$, respectively, and consider a graded free module $H$
containing $G$ which has a graded basis of at least $n$ elements $h_{i}$ of
degrees $\alpha_{i}$'s. Sending $h_{i}\mapsto b_{i}$, $1\leq i \leq n$, and
the rest of the basis to zero, will define a graded homomorphism of degree
zero $f:H\rightarrow H$ such that $f(H)=B$. Since $\operatorname{End}_{R}(H)$
is graded von Neumann regular, there is $g\in\operatorname{End}_{R}(H)_{0}$
such that $fgf=f$. Thus $fg$ is a homogeneous idempotent and $fg(H)=f(H)=B$.
Thus
\[
H = fg(H)\bigoplus(1-fg)(H)=B\bigoplus(1-fg)(H).
\]
and so $B$ is a direct summand of $H$. (Note that no suspension of $B$ is
required as $f$ and $g$ are of degree zero.) To finish the proof, we use the
following fact twice: if $N\leq M \leq F$ and $N$ and $M$ are direct summand
of $F$, then $N$ is a direct summand of $M$.
\end{proof}

Recall that a graded $R$-module $A$ is called \emph{graded quasi injective}  if for any graded $R$-submodule $M$, and graded homomorphism $f$, there is a graded homomorphism $g$ such that one can complete the following diagram. 
\[
\xymatrix{ M \ar@{^{(}->}[r]^{i} \ar[d]_{f}& A \ar@{.>}[dl]^g\\ A & }
\]

Our next theorem extends \cite[Theorem 1.22]{G} to the graded setting. Since in the following Theorem, the $R$-module $A$ is not necessarily finitely generated, we need to work with the graded ring $\operatorname{END}_R(A)$ (see~(\ref{hhh})) as apposed to the nongraded case which we consider the ring $\End_R(A)$.
Throughout the note, we denote the graded Jacobson radical of a $\Gamma$-graded ring $R$ by $J^{\gr}(R)$.  
Recall also that for $r\in R$, writing $r=\sum_{\alpha\in \Gamma} r_\alpha$, we denote by $\operatorname{supp}(r):=\{r_\alpha \mid r_\alpha \not = 0\}$.

\begin{theorem} \label{ch1mm} 
Let $R$ be a $\Gamma$-graded ring, and $A$ be a graded
quasi-injective right $R$-module. Let $Q=\operatorname{END}_{R}(A)$. Then

\begin{enumerate} [\upshape(a)]
\item $J^{\operatorname{gr}}(Q)=\big \{f\in\operatorname{END}_{R}(A) \mid\ker
f_{\alpha}\leq^{\operatorname{gr}}_{e} A,\text{ for all } \alpha
\in\operatorname{supp}(f)\big \}$.

\medskip

\item $Q/J^{\operatorname{gr}}(Q)$ is graded regular.

\medskip

\item If $J^{\operatorname{gr}}(Q)=0$ then $Q$ is graded right self-injective.
\end{enumerate}
\end{theorem}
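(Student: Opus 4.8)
The plan is to follow the architecture of Goodearl's proof of \cite[Theorem 1.22]{G}, but to run every step on the homogeneous components $f_\alpha$ and to keep careful track of the degree shifts (suspensions) that the graded setting forces. Write $W$ for the set on the right-hand side of (a). Since membership in $W$ is a condition imposed componentwise on the $f_\alpha$, $\alpha\in\operatorname{supp}(f)$, the set $W$ is automatically a graded subgroup of $Q$, so throughout part (a) it suffices to argue with homogeneous elements. I would first record three graded analogues of the standard facts on essential submodules: (i) a graded submodule of $A$ containing a graded-essential graded submodule is graded-essential; (ii) the intersection of two graded-essential graded submodules is graded-essential; (iii) if $K\leq^{\operatorname{gr}}_e A$ and $h\in Q$ is homogeneous, then $h^{-1}(K)\leq^{\operatorname{gr}}_e A$ (the usual one-line argument survives verbatim once one only tests on homogeneous elements). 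With these, checking that $W$ is a two-sided graded ideal is routine: for homogeneous $f\in W$ and homogeneous $h\in Q$ one has $\ker f\subseteq\ker(hf)$ and $\ker(fh)=h^{-1}(\ker f)$, both graded-essential by (i) and (iii), while closure under addition in a fixed degree uses (ii) with $\ker f\cap\ker g\subseteq\ker(f+g)$.

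Next I would prove the two inclusions. For $J^{\operatorname{gr}}(Q)\subseteq W$ I show $Q/W$ is graded regular, which yields (b) at the same time: given homogeneous $f$ of degree $\alpha$, pick (by Zorn in the lattice of graded submodules) a graded complement $K$ of $\ker f$, so $\ker f\oplus K\leq^{\operatorname{gr}}_e A$ and $f|_K$ is a graded monomorphism of degree $\alpha$; extend the degree-$(-\alpha)$ inverse $(f|_K)^{-1}\colon f(K)\to K$ to some $g\in Q_{-\alpha}$ by graded quasi-injectivity, and note that $fgf-f$ annihilates $\ker f\oplus K$, hence lies in $W$. Thus $Q/W$ is graded regular, so $J^{\operatorname{gr}}(Q/W)=0$, and since the quotient map sends $J^{\operatorname{gr}}(Q)$ into $J^{\operatorname{gr}}(Q/W)$ we obtain $J^{\operatorname{gr}}(Q)\subseteq W$. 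For the reverse inclusion I take homogeneous $f\in W$ of degree $\alpha$ and homogeneous $g$ of degree $-\alpha$, and show $1-gf$ has a homogeneous left inverse: it is injective, since a nonzero homogeneous $a$ with $(1-gf)(a)=0$ would, by graded-essentiality of $\ker f$, admit homogeneous $r$ with $0\neq ar\in\ker f$, forcing the contradiction $0=(1-gf)(ar)=ar$; and an injective homogeneous degree-$0$ endomorphism has a homogeneous left inverse by graded quasi-injectivity. By the standard description of the graded Jacobson radical this gives $f\in J^{\operatorname{gr}}(Q)$, so $W\subseteq J^{\operatorname{gr}}(Q)$, finishing (a) and (b).

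For (c) I assume $J^{\operatorname{gr}}(Q)=0$, so by (b) the ring $Q$ is itself graded regular, and I verify the graded Baer criterion. Given a graded right ideal $I$ and a homogeneous $Q$-linear map $\phi\colon I\to Q$, put $IA=\sum_{x\in I}x(A)\leq^{\operatorname{gr}}A$ and define $\psi\colon IA\to A$ by $\psi\big(\sum_i x_i(a_i)\big)=\sum_i\phi(x_i)(a_i)$. Well-definedness is the one genuinely new input, and this is exactly where graded regularity of $Q$ pays off: given a relation $\sum_i x_i(a_i)=0$, the finitely generated graded right ideal $I'\subseteq I$ generated by the homogeneous components of the $x_i$ is generated by a homogeneous idempotent $e\in I$, so $x_i=ex_i$ and $\phi(x_i)=\phi(e)x_i$, whence $\sum_i\phi(x_i)(a_i)=\phi(e)\big(\sum_i x_i(a_i)\big)=0$. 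Then $\psi$ extends to some $\hat\psi\in Q$ by graded quasi-injectivity, and $\phi(x)=\hat\psi x$ for every $x\in I$; thus $\phi$ is left multiplication by $\hat\psi$ and extends to $Q$, so $Q$ is graded right self-injective.

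The main obstacle I anticipate is the degree bookkeeping needed to ensure no spurious suspension creeps in: the inner inverse $g$ in the regularity step and the left inverse in the radical step both live in $Q_{-\alpha}$ rather than in $Q_0$, so one must know that graded quasi-injectivity extends homogeneous homomorphisms of \emph{arbitrary} degree, not merely degree-$0$ maps. This is true because a graded quasi-injective module is a fully invariant graded submodule of its graded injective hull under the homogeneous endomorphisms of every degree, and I would isolate this as a preliminary remark. By contrast, the well-definedness in (c), which looks the most delicate, collapses in one line once graded regularity of $Q$ is available.
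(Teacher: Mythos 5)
Your proposal is correct, and for parts (a) and (b) it is essentially the paper's own proof: you work with the same graded ideal ($K$ in the paper, your $W$), obtain $J^{\operatorname{gr}}(Q)\subseteq W$ by the same Zorn-complement argument ($B\oplus\ker f\leq^{\operatorname{gr}}_e A$, extend $(f|_B)^{-1}$ by graded quasi-injectivity, conclude $fgf-f\in W$, so $Q/W$ is graded regular), and obtain $W\subseteq J^{\operatorname{gr}}(Q)$ by the same step (for homogeneous $g$ of degree $-\alpha$, the degree-zero map $1-gf$ is injective because $\ker(1-gf)\cap\ker f=0$ and $\ker f$ is graded essential, and its inverse on the image extends to a homogeneous left inverse). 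The genuine divergence is in part (c). The paper gets well-definedness of the induced map $JA\to A$ \emph{structurally}: since $Q$ is graded regular by (b), $A$ is graded flat as a left $Q$-module, and the identification $J\otimes_Q A\cong JA$ together with $f\otimes 1$ produces the map with no relations to check. You instead prove well-definedness \emph{directly} via Goodearl's original idempotent trick: the finitely generated graded right ideal generated by the homogeneous components of the $x_i$ equals $eQ$ for a homogeneous idempotent $e\in I$, so $\phi(x_i)=\phi(e)x_i$ and every relation $\sum x_i(a_i)=0$ is annihilated. Both routes rest on (b) in the same essential way; yours is more elementary and self-contained (it needs only that finitely generated graded right ideals of a graded regular ring are idempotent-generated, not the graded flatness of all modules over such rings), while the paper's tensor argument is shorter once flatness is granted. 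One further point in your favor: you explicitly flag that quasi-injectivity must extend homogeneous maps of \emph{arbitrary} degree, since the inner inverse in the regularity step lives in $Q_{-\alpha}$; the paper uses this silently, implicitly reading ``graded homomorphism'' in its definition of graded quasi-injective as allowing any homogeneous degree. Your proposed justification via full invariance in the graded injective hull is sound under that (strong) definition; just note that full invariance under arbitrary-degree endomorphisms of the hull is \emph{equivalent} to the strong form of graded quasi-injectivity, not a consequence of the degree-zero form alone, so it is indeed best isolated as a definitional remark, as you suggest.
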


\begin{proof} (a) and (b) 
Let \[K=\big \{f\in\operatorname{END}_{R}(A) \mid\ker f_{\alpha}\leq
^{\operatorname{gr}}_{e} A,\text{ for all } \alpha\in\operatorname{supp}
(f)\big \}.\] One checks that $K$ is a graded two sided ideal of $A$. We first
show that $K\subseteq J^{\operatorname{gr}}(Q)$. It is enough to show that
$K^{h} \subseteq J^{\operatorname{gr}}(Q)$. Let $f \in K^{h}$, i.e.,
$f\in\operatorname{Hom}(A,A)_{\alpha}$, for some $\alpha\in\Gamma$ and $\ker f
\leq^{\operatorname{gr}}_{e} A$. We show that for any $r\in\operatorname{Hom}%
(A,A)_{-\alpha}$, $1-rf$ is invertible. We have $\ker(1-rf)\cap\ker f=0$.
Since $\ker f$ is essential, and $\ker(1-rf)$ is a graded right ideal of $A$, it
follows that $\ker(1-rf)=0$. Thus $\theta:=1-rf:A\rightarrow(1-rf)A$ is graded
isomorphism. Since $A$ is graded quasi-injective, there is a graded
homomorphism $g$ which completes the following diagram.
\[
\xymatrix{ (1-rf)A \ar@{^{(}->}[r] \ar[d]_{\theta^{-1}}& A \ar@{.>}[dl]^g\\ A & }
\]
It follows that $g(1-rf)=1$. So $f\in J^{\operatorname{gr}}(Q)$ and
consequently $K\subseteq J^{\operatorname{gr}}(Q)$. Next we show that $Q/K$ is
a graded regular ring. Let $f \in Q^{h}$, i.e., $f\in\operatorname{Hom}%
(A,A)_{\alpha}$ for some $\alpha\in\Gamma$. Set $S=\{N \leq^{\operatorname{gr}%
} A \mid N \cap\ker f =0 \}$. This is a poset in which each chain has an upper
bound. Thus by Zorn's lemma $S$ has a maximal element $B$. It follows that
$B\oplus\ker f \leq^{\operatorname{gr}}_{e} A$. Since $\theta=f: B \rightarrow
f(B)$ is graded isomorphism, and $A$ is graded quasi-injective, there is
graded homomorphism $g$ such that
\[
\xymatrix{ f(B) \ar@{^{(}->}[r] \ar[d]_{\theta^{-1}}& A \ar@{.>}[dl]^g\\ A & }
\]
It follows that $gf=1$ on $B$. Thus $(fgf-f)B=0$. So $B\oplus\ker f \leq
\ker(fgf-f)$ and therefore $fgf-f\in\operatorname{Hom}(A,A)_{\alpha}$ and
$\ker(fgf-f) \leq^{\operatorname{gr}}_{e} A$. So $fgf-f \in K$. Thus in $Q/K$
we have $\overline{fgf}=\overline{f}$, i.e., $Q/K$ is graded regular. So
$J^{\operatorname{gr}}(Q/R)=0$, which gives that $J^{\operatorname{gr}}(Q)=K$.
So $Q/J^{\operatorname{gr}}(Q)$ is graded regular. This gives (a) and (b). 

(c) Suppose $J^{\operatorname{gr}}(Q)=0$. Then $Q$ is graded regular and $A$
is the graded left $Q$-module. So $A$ is (graded) flat over $Q$. Suppose $J$
is a graded right ideal of $Q$ and $f:J\rightarrow Q$ a graded $Q$-module
homomorphism. We will show that there is a $h\in\operatorname{Hom}(A,A)_{0}$
such that one can complete the following diagram and thus $Q$ is
self-injective.
\begin{equation}
\label{jhmu76}\xymatrix{ J \ar@{^{(}->}[r] \ar[d]_{f}& Q \ar@{.>}[dl]^h\\ Q & }
\end{equation}
We have the following commutative diagram of graded maps.
\[
\xymatrix{ J\otimes_Q A \ar[r]^{f\otimes1} \ar[d]_{\cong}& Q\otimes_Q A \ar[d]^{\cong}\\ JA \ar[r]_g & A }
\]
Since $A$ is quasi-injective the graded homomorphism $g$ extends to a
$h\in\operatorname{Hom}(A,A)_{0}$ such that $h(xy)=g(x)y=f(x)y$ for any $x\in
J$ and $y\in A $. Thus we have $f(x)=h(x)$ for any $x\in J$. This completes
Diagram~\ref{jhmu76} and thus the proof.
\end{proof}

\begin{corollary}
\label{selfinj} Let $R$ be a graded ring, $A$ a graded right $R$-module and
$Q=\operatorname{END}_{R}(A)$. If $A$ is graded semi-simple or graded
non-singular quasi-injective, then $Q$ is graded regular self-injective ring.
\end{corollary}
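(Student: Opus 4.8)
The plan is to deduce this corollary directly from Theorem~\ref{ch1mm}. Both hypotheses are arranged so that $A$ is graded quasi-injective and $J^{\gr}(Q)=0$; once these are in hand, part~(b) of Theorem~\ref{ch1mm} gives that $Q=Q/J^{\gr}(Q)$ is graded regular, and part~(c) gives that $Q$ is graded right self-injective. So the whole task reduces to verifying graded quasi-injectivity (in the semi-simple case, where it is not assumed outright) and the vanishing of $J^{\gr}(Q)$ in each case, using the explicit description of $J^{\gr}(Q)$ from part~(a).

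First I would dispose of the graded semi-simple case. If $A$ is graded semi-simple, then every graded submodule is a graded direct summand; composing a graded map $f\colon M\to A$ with the graded projection $A\to M$ produces the required extension, so $A$ is graded quasi-injective and Theorem~\ref{ch1mm} applies. To show $J^{\gr}(Q)=0$, take a homogeneous $f\in J^{\gr}(Q)$, say $f\in\Hom(A,A)_{\alpha}$. By part~(a), $\ker f\leq^{\gr}_{e}A$. But an essential graded submodule of a graded semi-simple module must be the whole module: its graded complement is a direct summand meeting it trivially, hence is zero. Therefore $\ker f=A$, i.e. $f=0$, giving $J^{\gr}(Q)=0$.

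Next I would treat the graded non-singular quasi-injective case, where quasi-injectivity is assumed. Again take a homogeneous $f\in J^{\gr}(Q)\cap\Hom(A,A)_{\alpha}$, so that $\ker f\leq^{\gr}_{e}A$ by part~(a). The classical fact that $M/N$ is singular whenever $N\leq_{e}M$ carries over to the graded setting, so $\operatorname{im}f$, a graded submodule of $A$ isomorphic up to the $\alpha$-suspension induced by $\deg f=\alpha$ to $A/\ker f$, is graded singular. As $A$ is graded non-singular, this forces $\operatorname{im}f\subseteq Z^{\gr}(A)=0$, whence $\operatorname{im}f=0$ and $f=0$. Thus $J^{\gr}(Q)=0$ here as well, and in both cases Theorem~\ref{ch1mm}(b),(c) complete the proof.

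The only genuinely delicate points are the graded analogues of the two classical module facts I invoke: that an essential graded submodule of a graded semi-simple module is everything, and that a quotient by an essential graded submodule is graded singular. Both are routine once one tracks homogeneous components and the possible suspension by $\alpha=\deg f$, which is precisely the bookkeeping that working with $\operatorname{END}_{R}(A)$ and the component-wise formula in Theorem~\ref{ch1mm}(a) was designed to accommodate; I expect this suspension tracking, rather than any conceptual difficulty, to be the main thing to verify carefully.
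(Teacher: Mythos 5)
Your proposal is correct and follows essentially the same route as the paper: both proofs reduce the corollary to showing $J^{\operatorname{gr}}(Q)=0$ via the kernel description in Theorem~\ref{ch1mm}(a), using that an essential graded submodule of a graded semi-simple module is the whole module in the first case, and that the image of a homogeneous $f_\alpha$ with essential kernel is graded singular (hence zero by non-singularity, tracking the suspension $A(-\alpha)$) in the second. Your explicit verification that graded semi-simple modules are graded quasi-injective is a small point the paper leaves implicit, but it does not change the argument.
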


\begin{proof}
We show that $J^{\operatorname{gr}}(Q)=0$ and the corollary follows from
Theorem~\ref{ch1mm}. If $f\in J^{\operatorname{gr}}(Q)$ then $\ker f_{\alpha
}\leq^{\operatorname{gr}}_{e} A$ for all $\alpha\in\operatorname{supp}(f)$. If
$A$ is graded semi-simple, then $f$ is homogeneous and $\ker f=A$ and so $f=0$. If
$A$ is graded non-singular, then $f_{\alpha}:A\rightarrow A(-\alpha)$ is a
graded homomorphism. Thus $A/\ker f \cong A(-\alpha)$. If $A$ is non-singlaur
then $A(-\alpha)$ is non-singular as well and so $f=0$. Thus
$J^{\operatorname{gr}}(Q)=0$.
\end{proof}

The corollary below follows immediately from Corollary~\ref{selfinj} and it gives
a prototype example of graded regular self-injective rings.

\begin{corollary}
Let $R$ be a graded division ring and $A$ be a graded right $R$-module. Then
$\operatorname{END}_{R}(A)$ is a graded regular self-injective ring.
\end{corollary}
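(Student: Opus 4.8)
The plan is to reduce everything to Corollary~\ref{selfinj}, which already guarantees that $\operatorname{END}_R(A)$ is graded regular self-injective as soon as $A$ is graded semi-simple. So the entire task is to verify that every graded right module $A$ over a graded division ring $R$ is graded semi-simple.

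First I would establish the graded analogue of the basic structure theorem for vector spaces: over a graded division ring every graded module is graded free with a homogeneous basis. Using Zorn's lemma I pick a maximal $R$-linearly independent set $\{a_i\}_{i\in I}$ of homogeneous elements of $A$, with $a_i\in A_{\alpha_i}$. Maximality, together with the fact that every nonzero homogeneous element of $R$ is invertible, forces this set to span $A$: if a homogeneous $x\in A$ were not in the span then adjoining it would preserve independence, and conversely any homogeneous dependence relation can be solved for one of the $a_i$ since its coefficient is an invertible homogeneous element of $R$. This yields a graded isomorphism $A\cong_{\gr}\bigoplus_{i\in I} R(\alpha_i)$ of graded right $R$-modules, where the suspensions $R(\alpha_i)$ record the degrees of the basis elements.

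Next I would observe that each suspension $R(\alpha_i)$ is graded simple: any nonzero graded submodule contains a nonzero homogeneous element, which is invertible in $R$ and therefore generates all of $R(\alpha_i)$. Hence $A$ is a direct sum of graded simple modules, i.e.\ graded semi-simple, and Corollary~\ref{selfinj} immediately gives the conclusion. I do not expect any genuine obstacle here; the only mathematical content is the routine graded linear algebra of the first step, and the sole point demanding a little care is keeping track of the suspensions $R(\alpha_i)$ so that the decomposition is honestly graded rather than merely ungraded. This bookkeeping is standard (see \cite{H}).
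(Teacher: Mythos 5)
Your proof is correct and follows exactly the route the paper intends: the paper gives no explicit argument, stating only that the corollary ``follows immediately from Corollary~\ref{selfinj},'' and your verification that every graded module over a graded division ring is graded free with a homogeneous basis (hence a direct sum of graded simple suspensions $R(\alpha_i)$, hence graded semi-simple) is precisely the routine step being left implicit, consistent with the paper's own appeal to \cite[\S 1.4]{H} for graded bases. No gaps; your bookkeeping of the suspensions is the right point to be careful about, and you handled it correctly.
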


\begin{corollary}
Let $R$ be a graded ring such that $R$ is graded right nonsingular. Then its maximal graded right quotient ring $Q^{\operatorname{gr}}(R)$ is graded regular and graded right self-injective.   
\end{corollary}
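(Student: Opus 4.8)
```latex
The plan is to realize the maximal graded right quotient ring $Q^{\operatorname{gr}}(R)$ as $\operatorname{END}_R(E)$ for a suitable graded injective module $E$, and then invoke Corollary~\ref{selfinj}. Recall that the maximal graded right quotient ring of a graded nonsingular ring $R$ is constructed as the graded endomorphism ring of the graded injective hull of $R_R$; that is, if $E = E^{\operatorname{gr}}(R_R)$ denotes the graded injective hull of $R$ as a graded right $R$-module, then $Q^{\operatorname{gr}}(R) \cong_{\operatorname{gr}} \operatorname{END}_R(E)$. So the first step is to set up this identification precisely, citing the standard construction of the maximal graded quotient ring in the nonsingular setting (the graded analogue of the Johnson--Utumi construction, as developed in the graded module theory literature such as \cite{NvO}).

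The second step is to verify that $E$ satisfies the hypotheses of Corollary~\ref{selfinj}. Since $E$ is a graded injective module, it is in particular graded quasi-injective (injective modules satisfy the quasi-injective lifting property trivially). It remains to check that $E$ is graded nonsingular. This follows because $R$ is graded right nonsingular and $E$ is an essential graded extension of $R_R$: graded nonsingularity is inherited by essential extensions, since the graded singular submodule $Z^{\operatorname{gr}}(E)$ intersects $R$ trivially (as $Z^{\operatorname{gr}}(E) \cap R \subseteq Z^{\operatorname{gr}}(R) = 0$) and hence is zero by essentiality. Thus $E$ is graded nonsingular quasi-injective, and Corollary~\ref{selfinj} applies directly to conclude that $\operatorname{END}_R(E)$ is graded regular and graded right self-injective.

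Combining these two steps, since $Q^{\operatorname{gr}}(R) \cong_{\operatorname{gr}} \operatorname{END}_R(E)$ as graded rings, we transport the conclusion of Corollary~\ref{selfinj} along this isomorphism: $Q^{\operatorname{gr}}(R)$ is graded regular and graded right self-injective, as required.

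I expect the main obstacle to be the careful setup of the graded maximal quotient ring and its identification with $\operatorname{END}_R(E)$, since unlike the finitely generated case one must work with $\operatorname{END}$ rather than $\operatorname{End}$ (as emphasized in the discussion preceding Theorem~\ref{ch1mm}), and the graded injective hull must be shown to exist and to be compatible with the suspension-sensitive grading conventions. The nonsingularity inheritance is routine once the essential-extension property of the graded injective hull is in hand, so the real content lies in correctly invoking the graded Johnson--Utumi machinery rather than in any delicate calculation.
```
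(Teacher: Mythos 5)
Your proposal is correct and matches the paper's intended argument: the paper states this corollary without proof as an immediate consequence of Corollary~\ref{selfinj}, via exactly the identification $Q^{\operatorname{gr}}(R)\cong_{\operatorname{gr}}\operatorname{END}_R(E)$ for $E$ the graded injective hull of $R_R$ (the graded Johnson--Utumi construction), together with the observations that $E$ is graded quasi-injective and, being a graded essential extension of the graded nonsingular module $R_R$, graded nonsingular. Your two verification steps are precisely the content the paper leaves to the reader, so there is nothing to add.
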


Since the Leavitt path algebra $L_{K}(E)$ associated to an arbitrary graph $E$ is graded regular, it is graded nonsingular. Consequently, its maximal graded right quotient ring is graded regular and graded right self-injective.

Since for any central homogeneous idempotent $e$, $eR (1-e)R=0$, if $R$ is
graded prime, it follows that $B^{\operatorname{gr}}(R)=\{0,1\}$. If $R$ is,
in addition, graded regular and abelian, i.e., $I^{\operatorname{gr}%
}(R)=B^{\operatorname{gr}}(R)$, then for any $0\not = x\in R^{h}$, there is
$y\in R^{h}$ such that $xyx=x$, so $xy \in B^{\operatorname{gr}}(R)$, and thus
$xy=1$. It follows that $R$ is a graded division ring. The following
proposition shows that if a corner of $R$ is abelian, then $R$ is graded
isomorphic to matrices over a graded division ring.

\begin{theorem} \label{sochgh} 
Let $R$ be a $\Gamma$-graded ring. Then
$R\cong_{\operatorname{gr}}\operatorname{END}_{D}(A)$, where D is a graded
division ring and $A$ is a graded $D$-module if and only if $R$ is graded
prime, graded regular, self-injective ring with $\operatorname{soc}%
^{\operatorname{gr}}(R_{R})\not = 0$.
\end{theorem}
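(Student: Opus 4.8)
The plan is to prove the two implications separately, using the corollaries above for the easy direction and a graded density argument for the hard one.

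For the forward implication, suppose $R\cong_{\operatorname{gr}}\operatorname{END}_D(A)$ with $D$ a graded division ring and $A\neq 0$ a graded $D$-module. Since a graded module over a graded division ring is graded semisimple, Corollary~\ref{selfinj} immediately gives that $R$ is graded regular and graded self-injective. For graded primeness I would observe that $A$ is a faithful graded-simple left $R$-module: fixing a homogeneous basis of $A$ over $D$, any nonzero homogeneous $a\in A$ can be carried to an arbitrary homogeneous target by a single homogeneous endomorphism, so every nonzero homogeneous element of $A$ generates $A$. Thus $R$ is graded (left) primitive, and graded primitivity forces graded primeness by the usual argument ($JA=A$ whenever $J$ is a nonzero graded ideal, so $IJ=0$ forces $IA=0$ and hence $I=0$). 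Finally, a rank-one homogeneous idempotent $p$ (projection onto a basis line) satisfies $pRp\cong_{\operatorname{gr}} D$, so $pR$ is a minimal graded right ideal and $\operatorname{soc}^{\operatorname{gr}}(R_R)\neq 0$.

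For the converse, assume $R$ is graded prime, graded regular, graded self-injective with $\operatorname{soc}^{\operatorname{gr}}(R_R)\neq 0$. First I would extract a minimal graded right ideal; since $R$ is graded regular this ideal is generated by a homogeneous idempotent $e$, and by the graded Schur lemma $D:=eRe=\operatorname{END}_R(eR)$ is a graded division ring. Setting $A:=Re$, viewed as a graded right $D$-module, left multiplication gives a graded ring homomorphism $\lambda\colon R\to\operatorname{END}_D(A)=:Q$. Graded primeness makes $\lambda$ injective: its kernel is the left annihilator of $Re$, a graded two-sided ideal annihilating the nonzero ideal $ReR$, hence zero. By Corollary~\ref{selfinj}, $Q$ is itself graded regular and graded self-injective.

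It remains to prove $\lambda$ is onto, and this is the main obstacle. I would proceed in two stages. First, a graded version of the Jacobson density theorem shows that $\lambda(R)$ is dense in $Q$ and, together with the structure of graded prime rings with nonzero socle, that $\operatorname{soc}^{\operatorname{gr}}(Q_Q)=\lambda\!\left(\operatorname{soc}^{\operatorname{gr}}(R_R)\right)\subseteq\lambda(R)$; the bookkeeping of degrees and suspensions here is the delicate point, since $\lambda(e)$ is a homogeneous rank-one idempotent and one must track the graded shifts when realizing graded-finite-rank maps inside $\lambda(R)$. Second, graded primeness yields that $\operatorname{soc}^{\operatorname{gr}}(Q_Q)$ is graded essential in $Q_Q$; as it lies in $\lambda(R)$ and is a two-sided ideal, for any nonzero homogeneous $q\in Q$ one has $0\neq q\cdot\operatorname{soc}^{\operatorname{gr}}(Q_Q)\subseteq q\lambda(R)\cap\lambda(R)$, so $\lambda(R)$ is graded essential in $Q_R$. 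Now graded self-injectivity of $R$ says $R_R$ is graded injective, so $\lambda(R)$ is a graded direct summand of $Q_R$; essentiality forces the complement to vanish and $\lambda(R)=Q$. The hard part throughout is this graded density step together with the verification that no genuine suspension intervenes in identifying the socle of $Q$ with the image of the socle of $R$.
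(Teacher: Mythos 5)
Your forward direction is correct and essentially matches the paper's: the paper also invokes Corollary~\ref{selfinj} (a graded module over a graded division ring is graded semisimple), checks graded primeness by a direct computation ($xzy\neq 0$ for nonzero homogeneous $x,y$, using extension of $\{y(b)\}$ to a graded basis) rather than via graded primitivity as you do, and, like you, gets $\operatorname{soc}^{\gr}(R_R)\neq 0$ from a homogeneous idempotent $p$ with $pRp\cong_{\gr}D$ — note that both routes silently use the (graded semiprime) fact that $pRp$ a graded division ring forces $pR$ graded minimal, which deserves a word. For the converse, the paper is terse: it picks $e$ with $eR$ graded simple, notes $B^{\gr}(R)=\{0,1\}$ so $e$ is graded faithful, and cites ``the graded version of \cite[Theorem 9.8]{G}'' to conclude $R\to\operatorname{END}_{eRe}(Re)$ is an isomorphism. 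Your plan is in effect an attempt to prove that graded Theorem~9.8 from scratch, and your closing mechanism — $\lambda(R)$ graded essential in $Q_R$, $R_R$ graded injective, hence $\lambda(R)$ is a graded direct summand of $Q_R$ and equals $Q$ — is exactly the right one.

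The genuine gap is the step you yourself flag as the main obstacle: deducing $\operatorname{soc}^{\gr}(Q_Q)\subseteq\lambda(R)$ from a graded density theorem. Density only lets you match a prescribed homogeneous map on \emph{finitely many} homogeneous vectors; it gives no control over an operator's kernel, while a rank-one operator $u\varphi$ requires realizing a functional $\varphi$ prescribed on \emph{all} of $A$. Already in the ungraded setting this implication is false: the ring of a dual pair $(V,W)$ with $W\subsetneq V^{\ast}$ (together with scalars) is dense in $\operatorname{End}_D(V)$, yet its socle is properly contained in $\operatorname{soc}(\operatorname{End}_D(V))$. So ``dense $\Rightarrow$ contains the socle of $Q$'' cannot be proved as stated; the identity $\operatorname{soc}^{\gr}(Q)=\lambda(\operatorname{soc}^{\gr}(R))$ is true here only a posteriori, once surjectivity of $\lambda$ is known. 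Fortunately the entire density/socle detour is unnecessary: for homogeneous $a\in A=Re$ and $q\in Q$ one has $q\lambda(a)=\lambda(q(a))$, since for $y\in Re$ one writes $ay=a\cdot(ey)$ with $ey\in eRe=D$ and uses right $D$-linearity of $q$ together with $q(a)y=q(a)ey$. Hence if $q\neq 0$ is homogeneous, choosing a homogeneous $a$ with $q(a)\neq 0$ gives $0\neq\lambda(q(a))=q\lambda(a)\in q\lambda(R)\cap\lambda(R)$ (nonvanishing by the injectivity of $\lambda$, which your primeness argument correctly establishes), so $\lambda(R)\leq_{e}^{\gr}Q_R$ in one line. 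Substituting this computation for the density step turns your outline into a complete, self-contained proof of the graded Theorem~9.8 that the paper leaves as a citation.
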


\begin{proof}
Suppose $R\cong_{\operatorname{gr}}\operatorname{END}_{D}(A)$, where $D$ is a
graded division ring and $A$ is a graded $D$-module. Then by
Corollary~\ref{selfinj}, $R$ is graded regular, self-injective. For $0\not =
x,y\in\operatorname{END}_{D}(A)^{h}$, there are homogeneous elements $a,b\in
A$ such that $x(a)\not =0$ and $y(b)\not =0$. Since $y(b)$ is homogeneous, one
can extend $\{y(b)\}$ to a graded basis for $A$~\cite[\S 1.4]{H}. Thus there is a
$z\in\operatorname{END}_{D}(A)^{h}$ such that $z(y(b))=a$. It follows that
$x(z(y(b)))=x(a)\not =0$. Thus $xzy \not =0$. This shows that $R$ is a graded
prime ring. Furthermore, one can choose a homogeneous element $e \in
R=\operatorname{END}_{D}(A)$ such that $eRe\cong_{\operatorname{gr}} D$. Thus
$\operatorname{End}_{R}(eR,eR)\cong_{\operatorname{gr}} eRe\cong%
_{\operatorname{gr}} D$ and the fact that $R$ is graded prime, implies that
$eR$ is graded simple. Thus $\operatorname{soc}^{\operatorname{gr}}(R)\not =0$.

For the converse of the theorem, since $\operatorname{soc}^{\operatorname{gr}%
}(R)\not =0$, one can choose a homogeneous idempotent $e\in R$ such that $eR$
is graded right simple. Thus $eRe \cong_{\operatorname{gr}} \operatorname{End}%
_{R}(eR,eR)$ is a graded division ring. Since $R$ is graded prime,
$B^{\operatorname{gr}}(R)=\{0,1\}$. Thus by the graded version of \cite[Theorem 9.8]{G}, the graded ring
homomorphism $R\rightarrow\operatorname{END}_{eRe}(Re)$ is an isomorphism.
\end{proof}

\begin{proposition} \label{primregselfinjtype1} 
Let $R$ be a $\Gamma$-graded prime,
regular and right self-injective ring. Then $R$ is gr-Type I if and only if
$R$ is graded isomorphic to $\operatorname{END}_{D}(A)$, where $D$ is a graded
division ring and $A$ is a graded right $D$-module.
\end{proposition}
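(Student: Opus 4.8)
The plan is to prove both implications by reducing to the characterization already established in Theorem \ref{sochgh}. The forward direction from that theorem is almost immediate: if $R \cong_{\gr} \END_D(A)$ for a graded division ring $D$, then Theorem \ref{sochgh} tells us $R$ is graded prime, graded regular and self-injective with $\soc^{\gr}(R_R) \neq 0$, so it remains only to exhibit a graded faithful abelian idempotent. The natural candidate is an idempotent $e$ with $eRe \cong_{\gr} D$, which exists by the argument in the proof of Theorem \ref{sochgh}; I would show $e$ is graded abelian because $eRe$ is a graded division ring, hence $I^{\gr}(eRe) = \{0,e\} = B^{\gr}(eRe)$, and graded faithful because $R$ being graded prime forces $B^{\gr}(R) = \{0,1\}$, so any nonzero central homogeneous idempotent killed by $e$ would have to be $0$.

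For the harder converse, I assume $R$ is graded prime, regular, self-injective and of gr-Type I, so $R$ contains a graded faithful abelian idempotent $e$. By Theorem \ref{sochgh} it suffices to produce a nonzero graded right socle, i.e.\ a graded right simple summand of $R$. First I would analyze the corner $eRe$: since $e$ is abelian, $eRe$ is graded abelian, and since corners of graded regular self-injective rings are again graded regular self-injective (this uses that $eRe = e\,\END_R(R)\,e \cong \END_R(eR)$ and Corollary \ref{selfinj}), $eRe$ is a graded abelian regular self-injective ring. Then I would invoke the discussion immediately preceding Theorem \ref{sochgh}: a graded prime, graded regular abelian ring is a graded division ring. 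The subtlety is that $eRe$ need not itself be graded prime; but $R$ being graded prime together with $B^{\gr}(R) = \{0,1\}$ should force $eRe$ to have trivial central homogeneous idempotents as well, and combined with abelianness this makes $eRe$ a graded division ring.

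With $eRe$ a graded division ring in hand, the right ideal $eR$ has graded endomorphism ring $\End_R(eR) \cong_{\gr} eRe$ equal to a graded division ring, and graded primeness of $R$ (exactly as in the converse of Theorem \ref{sochgh}) forces $eR$ to be graded right simple. Hence $\soc^{\gr}(R_R) \neq 0$, and the hypotheses of Theorem \ref{sochgh} are met, yielding $R \cong_{\gr} \END_D(A)$ with $D = eRe$ and $A = Re$.

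I expect the main obstacle to be the passage from \emph{$e$ abelian} to \emph{$eRe$ a graded division ring}. The cited fact only asserts that a graded prime abelian regular ring is a graded division ring, so the real work is verifying that $eRe$ inherits enough of the primeness of $R$ — concretely, that $B^{\gr}(eRe) = \{0,e\}$. Here the graded faithfulness of $e$ is essential: it guarantees that no nontrivial central homogeneous idempotent of $R$ gets trapped inside the corner, which is what lets the triviality of $B^{\gr}(R)$ descend to $B^{\gr}(eRe)$. Care must also be taken that all idempotents and isomorphisms remain homogeneous of degree zero, so that no stray suspensions $A(\alpha)$ intrude when identifying $\End_R(eR)$ with $eRe$.
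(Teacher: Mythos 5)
Your overall route coincides with the paper's. The forward direction is exactly the paper's argument: produce a homogeneous idempotent $e$ with $eRe\cong_{\gr}D$, note $e$ is graded abelian because a graded division ring has only the trivial homogeneous idempotents, and note $e$ is graded faithful because graded primeness forces $B^{\gr}(R)=\{0,1\}$. The converse likewise runs, as in the paper, through showing that $eRe$ is a graded division ring for a graded faithful abelian idempotent $e$, deducing that $eR$ is graded simple, hence $\soc^{\gr}(R_R)\neq 0$, and then invoking Theorem \ref{sochgh}.

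However, the one step you flag as ``the real work'' is handled incorrectly. You propose to obtain $B^{\gr}(eRe)=\{0,e\}$ from the graded faithfulness of $e$, claiming that triviality of $B^{\gr}(R)$ ``descends'' to the corner. That mechanism does not work as stated: an idempotent central in $eRe$ need not be central in $R$, so faithfulness --- a condition quantified over $B^{\gr}(R)$ --- says nothing directly about $B^{\gr}(eRe)$. The correct and much simpler point, which is precisely what the paper asserts, is that corners of graded prime rings are graded prime: if $x,y\in(eRe)^{h}$ with $x(eRe)y=0$, then since $x=xe$ and $y=ey$ we get $xRy=x(eRe)y=0$, whence $x=0$ or $y=0$. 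Thus $eRe$ is graded prime, graded regular and abelian, and the discussion preceding Theorem \ref{sochgh} applies verbatim to make $eRe$ a graded division ring; faithfulness of $e$ plays no role at this step (under graded primeness every nonzero homogeneous idempotent is automatically faithful anyway, since $B^{\gr}(R)=\{0,1\}$), and your detour through self-injectivity of the corner via Corollary \ref{selfinj} is unnecessary. One gloss you share with the paper: the implication ``$eRe$ graded division $\Rightarrow$ $eR$ graded simple'' is the graded Brauer lemma and deserves a line --- for $0\neq x\in eR$ homogeneous, graded semiprimeness yields a homogeneous $r$ with $xrx\neq 0$; then $xre\in eRe$ is nonzero, hence invertible in $eRe$, and $e=(xre)(xre)^{-1}\in xR$, so $eR=xR$ for every such $x$.
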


\begin{proof}
Suppose $R\cong_{\operatorname{gr}}\operatorname{END}_{D}(A)$, where D is a
graded division ring and $A$ is a graded $D$-module. Then $\operatorname{soc}%
^{\operatorname{gr}}(R)\not = 0$ by Theorem~\ref{sochgh}. Thus there is a
homogeneous idempotent $e \in R$ such that $eR$ is graded right simple
$R$-module. Thus $\operatorname{End}_{R}(eR,eR) \cong eRe$ is a graded
division ring. Thus $e$ is graded abelian idempotent. Since $R$ is graded
prime, $B^{\operatorname{gr}}(R)=\{0,1\}$. Hence $e$ is faithful as well, and
so $R$ is gr-Type I.

For the converse of the theorem, suppose $R$ is gr-Type I. Thus there is
homogeneous idempotent $e\in R$ which is faithful and abelian. Hence, $eRe$ is
graded prime, regular and abelian. It follows that $R$ is a graded division ring
(see the argument before Theorem~\ref{sochgh}) and consequently, we have that $eR$ is graded
simple. Thus $\operatorname{soc}^{\operatorname{gr}}(R)\not =0$. Now
Theorem~\ref{sochgh} implies that $R\cong_{\operatorname{gr}}%
\operatorname{END}_{D}(A)$.
\end{proof}

\begin{proposition}
\label{ds} Let $R$ be a graded regular self-injective ring and $A$ be a graded right ideal of $R$. Then there is a unique
graded direct summand $B$ of $R$ such that $A\leq_{e}^{\gr}B$.
\end{proposition}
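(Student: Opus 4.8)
The plan is to realize $B$ as a copy of the graded injective hull of $A$ sitting inside $R$, and to deduce uniqueness from the fact that a graded regular ring is graded nonsingular. Existence will use that $R_R$ is graded injective (as $R$ is graded self-injective); uniqueness will use that no nonzero degree-zero graded homomorphism into $R$ can have graded-essential kernel.

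For existence, first I would apply Zorn's lemma to the poset of graded submodules $N$ of $R$ with $A \leq^{\gr}_e N$, ordered by inclusion (nonempty, since $A$ itself qualifies); the union of a chain of graded essential extensions of $A$ inside $R$ is again one, so a maximal element $B$ exists. The key step is to show that $B$ is graded injective. Given any graded essential extension $B \leq^{\gr}_e B'$, the inclusion $B \hookrightarrow R$ extends, by graded injectivity of $R$, to a degree-zero map $g \colon B' \to R$; since $g$ is the identity on $B$ and $B \leq^{\gr}_e B'$ forces $\ker g = 0$, the map $g$ embeds $B'$ into $R$ as a graded essential extension of $B$, and maximality of $B$ gives $g(B') = B$, hence $B' = B$. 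Thus $B$ has no proper graded essential extension and is therefore graded injective, so the identity $B \to B$ extends along $B \hookrightarrow R$ to a degree-zero graded retraction $\rho \colon R \to B$, exhibiting $B$ as a graded direct summand of $R$ with $A \leq^{\gr}_e B$.

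For uniqueness, suppose $B_1$ and $B_2$ are graded direct summands with $A \leq^{\gr}_e B_i$. Being graded direct summands of the graded-injective module $R_R$, each $B_i$ is graded injective. I would extend the inclusion $A \hookrightarrow B_2$ along $A \hookrightarrow B_1$ to a degree-zero map $\phi \colon B_1 \to B_2$ and consider $\psi = \iota_2 \circ \phi - \iota_1 \colon B_1 \to R$, where $\iota_i \colon B_i \hookrightarrow R$ are the inclusions. Since $\phi$ restricts to the identity on $A$, the map $\psi$ vanishes on $A$, so $\ker \psi \geq A$ is graded essential in $B_1$. Because $R$ is graded regular it is graded nonsingular, and a degree-zero graded homomorphism into a graded nonsingular module whose kernel is graded essential must be zero (its image is the image of a graded singular module, hence a graded singular submodule of $R$, hence $0$). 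Thus $\psi = 0$, i.e.\ $\phi$ is the inclusion of $B_1$ into $B_2$ inside $R$, giving $B_1 \subseteq B_2$; by symmetry $B_1 = B_2$.

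The main obstacle is bookkeeping in the graded category rather than any deep new idea: one must ensure that every homomorphism produced by graded (self-)injectivity can be taken of degree zero, so that no suspension intervenes, and one must check the graded analogues of the two standard nonsingular-module facts, namely that a quotient of a graded module by a graded essential submodule is graded singular and that a graded singular submodule of a graded nonsingular module is zero. The one genuinely load-bearing structural input is that a maximal graded essential extension of $A$ inside the graded-injective ring $R$ is itself graded injective; everything else is routine once graded regular $\Rightarrow$ graded nonsingular is invoked.
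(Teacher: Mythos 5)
Your proof is correct and takes essentially the same route as the paper: you realize $B$ as the graded injective hull of $A$ inside $R$ (constructed by hand via Zorn's lemma and the fact that a maximal graded essential extension inside the graded injective module $R_R$ admits no proper graded essential extension, where the paper simply invokes the graded injective envelope), and you obtain uniqueness from graded nonsingularity of the graded regular ring $R$. The paper packages the uniqueness step by identifying $B/A$ with the graded singular submodule of $R/A$, which is the same nonsingularity argument as your difference-map computation with $\psi=\iota_2\circ\phi-\iota_1$.
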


\begin{proof}
Let $B$ be the graded injective envelope of $A$. Then clearly $B$ is a graded direct summand of $R$ and $A\leq_{e}^{\gr}B$. Note that $B/A$ is graded singular and $R/B$ is graded nonsingular and hence $B/A$ equals the graded singular submodule of $R/A$. If $B'$ is any graded direct summand of $R$ with $A\leq_{e}^{\gr}B'$, then $B'/A$ also equals the graded singular submodule of $R/A$. This shows $B=B'$.
\end{proof}

\begin{proposition}\label{whyheadij}
\label{9.5} Let $J$ be a graded two-sided ideal of $R$.

\begin{enumerate}[\upshape(a)]

\item There is a unique homogeneous idempotent $e\in B^{\gr}(R)$ such that
$J_{R}\leq_{e}^{\gr}eR_{R}$.

\medskip

\item If $R/J$ is graded non-singular, then $J=eR$.
\end{enumerate}
\end{proposition}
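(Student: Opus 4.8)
The plan is to reduce everything to the closure operation furnished by Proposition~\ref{ds} and then to promote the resulting summand idempotent to a central one, which is the only delicate point. Throughout $R$ is graded regular and graded self-injective. Since $J$ is in particular a graded right ideal, I would first apply Proposition~\ref{ds} to obtain the unique graded direct summand $B$ of $R$ with $J\leq^{\gr}_{e}B$. Writing $R=B\oplus C$ as graded right modules and decomposing the identity $1\in R_{0}$ accordingly yields a homogeneous idempotent $e$ of degree $0$ with $B=eR$ and $C=(1-e)R$. This $B$ is forced on us, so once centrality of $e$ is established the uniqueness of the pair $(e,B)$ in Proposition~\ref{ds} will give the uniqueness clause of (a): two \emph{central} homogeneous idempotents generating the same right ideal must coincide.

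Next I would show that $eR$ is a two-sided ideal, which is where the hypothesis that $J$ is two-sided enters. Fix a homogeneous $a\in R$; left multiplication $\lambda_{a}$ is a graded homomorphism, and because $aJ\subseteq J\subseteq B$ it induces a graded map $\bar\lambda_{a}\colon B\to R/B\cong_{\gr}(1-e)R$ that vanishes on $J$. Since $J\leq^{\gr}_{e}B$, the image $B/\ker\bar\lambda_{a}$ is graded singular; but $R$ is graded regular, hence graded nonsingular, so its summand $(1-e)R$ is graded nonsingular and the image must be zero. Thus $aeR\subseteq eR$ for every homogeneous $a$, so $ReR\subseteq eR$ and $e$ is left semicentral, i.e.\ $(1-e)Re=0$.

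The main obstacle is upgrading left semicentrality to centrality, i.e.\ proving $eR(1-e)=0$. Here I would invoke that a graded regular ring is graded semiprime: if $I$ is a nonzero graded ideal with $I^{2}=0$ and $0\neq x\in I$ is homogeneous with inner inverse $y$, then $x=xyx\in I^{2}=0$, a contradiction. Applying this to the graded ideal $I=eR(1-e)R$, whose square vanishes precisely because $(1-e)Re=0$, forces $I=0$ and hence $eR(1-e)=0$. Therefore $e$ is central, so $e\in B^{\gr}(R)$, and together with the uniqueness of $B$ this completes part (a).

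Finally, for (b), with $e$ the idempotent produced in (a) and $R/J$ graded nonsingular, I would note that $B/J=eR/J$ is a graded submodule of $R/J$ which is graded singular, since $J\leq^{\gr}_{e}eR$. A graded singular submodule of a graded nonsingular module is zero, so $B/J=0$ and $J=eR$.
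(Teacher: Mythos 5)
Your proposal is correct and follows essentially the same route as the paper's proof: reduce to Proposition~\ref{ds} to get $B=eR$ with $e$ homogeneous of degree $0$, use the singular image of $eR/J$ landing in the graded nonsingular module $(1-e)R$ to obtain $(1-e)Re=0$, then kill the square-zero graded ideal $eR(1-e)R$ to get $eR(1-e)=0$ and centrality, with (b) following as in the paper. The only (welcome) difference is that you make explicit the graded-semiprimeness of graded regular rings via the inner-inverse computation, a step the paper's proof uses implicitly in passing from $(eR(1-e)R)^{2}=0$ to $eR(1-e)R=0$.
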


\begin{proof}
The following proof is essentially Goodearl's proof in \cite{G}
with minor modifications.

(a) By Proposition \ref{ds}, $J\leq_{e}^{\gr}D$, a graded direct summand of
$R_{R}$ which is unique. Now $D=eR$ where $e$ is a homogeneous idempotent of
degree $0$. (Note that $e=\pi(1)$, where $\pi:R\rightarrow D$ is the graded
coordinate projection. So $\pi(1)$ is homogeneous, as $1$ is homogeneous of
degree $0$. Since $\pi(1)$ is also an idempotent, it is of degree $0$). Given
any homogeneous element $x\in R$, we have $xJ\leq^{\gr}J_{R}\leq_{e}^{\gr}eR$
and so $(1-e)xJ=0$. Then the left multiplication by the homogeneous element
$(1-e)x$ induces a graded morphism from the graded singular module $eR/J$ to
$(1-e)xeR$. So $(1-e)xeR$ is a graded singular submodule of $(1-e)R$. As
$(1-e)R$ is graded non-singular, $(1-e)xeR=0$. This holds for all homogeneous
elements $x$ and, consequently, $(1-e)ReR=0$ and so $(1-e)Re=0$. This implies
$(eR(1-e)R)^{2}=0$ and so $(eR(1-e))R=0$ from which we obtain $eR(1-e)=0$. So
the Pierce decomposition of $R$ corresponding to $e$ becomes $R=$ $eRe+$
$(1-e)R(1-e)$. Writing each element $a\in R$ in terms of the last equation, it
is clear that $e$ commutes with $a $ and so $e\in B^{\gr}(R)$ and $J\leq_{e}%
^{\gr}eR$, as desired. If there is another homogeneous idempotent $f\in B(R)$
such that $J\leq_{e}^{\gr}fR$, then by the uniqueness of $D$, $eR=D=fR$ which
implies $e=f$. Hence $e$ is unique.

(b) Now $eR/J$ is graded singular and so if $R/J$ is graded non-singular, then
$eR/J=0$ Hence $J=eR$, where $e\in B^{\gr}(R)$.
\end{proof}

\begin{definition}
\label{fthfulIdempt} A graded right $R$-module $M$ is said to be \textit{graded faithful} if for 
each homogeneous element $r\in R$, there is a homogeneous element $a\in M$
such that $ar\neq0$. If a graded right $R$-module is graded faithful, it is
also faithful.
\end{definition}

We give below the description of graded faithful idempotents.

\begin{lemma}
\label{10.1} Let $R$ be a graded regular self-injective ring. The following properties are equivalent for a homogeneous
idempotent $e\in R$. 

\begin{enumerate}[\upshape(a)]

\item $e$ is graded faithful;

\medskip

\item $(ReR)_{R}$ $\leq_{e}^{\gr}R_{R}$;

\medskip

\item $Re$ is a  graded left faithful ideal;

\medskip

\item $eR$ is a  graded right faithful ideal;

\medskip

\item $\HOM(eR,J)\neq0$ for all non-zero graded right ideals $J$ of $R$.
\end{enumerate}
\end{lemma}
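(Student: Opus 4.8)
The plan is to prove the cyclic chain of implications $(a)\Rightarrow(b)\Rightarrow(c)\Rightarrow(d)\Rightarrow(e)\Rightarrow(a)$, mirroring the nongraded proof of Goodearl's characterization of faithful idempotents but tracking degrees throughout. Let me sketch each link. For $(a)\Rightarrow(b)$, I would use Proposition~\ref{9.5}(a): since $ReR$ is a graded two-sided ideal, there is a unique $c\in B^{\gr}(R)$ with $(ReR)_R\leq_e^{\gr}cR_R$. The goal is to show $c=1$, i.e.\ $ReR$ is graded essential in all of $R_R$. Observe that $(1-c)ReR\leq (1-c)cR=0$, so in particular $(1-c)e=0$; since $1-c\in B^{\gr}(R)$ and $e$ is graded faithful, Definition of graded faithful idempotent forces $1-c=0$, hence $c=1$ and $(b)$ holds.

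For $(b)\Rightarrow(c)$, note that $Re\subseteq ReR$ and I would argue that the left-handed analogue of graded essentiality gives that $Re$ annihilates no nonzero homogeneous element on the right in the relevant sense; concretely, if $r\in R^h$ with $Rer=0$ then $ReRr=0$, but $ReR\leq_e^{\gr}R$ forces $Rr=0$ (essential ideals contain no nonzero right annihilators in a nonsingular regular ring), hence $r=0$, giving that $Re$ is graded left faithful. The implication $(c)\Rightarrow(d)$, and likewise its converse in the full cycle, is a symmetric left/right statement: using that $R$ is graded regular (so left and right annihilator conditions are controlled by idempotents) one translates the left-faithfulness of $Re$ into the right-faithfulness of $eR$, essentially because the relevant central support idempotent is the same on both sides. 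I would make this precise by showing both $(c)$ and $(d)$ are equivalent to the central support of $e$ being $1$, via Proposition~\ref{9.5} applied to the two-sided ideal $ReR$.

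The step I expect to carry the most weight is $(d)\Rightarrow(e)$ and $(e)\Rightarrow(a)$, where the graded Hom-functor interacts with the suspensions. For $(d)\Rightarrow(e)$, given a nonzero graded right ideal $J$, pick a nonzero homogeneous $x\in J$; since $eR$ is graded right faithful there is a homogeneous $a\in eR$ and a homogeneous $r$ with $axr\neq 0$, and I would build from this a nonzero graded homomorphism $eR\to J$ by sending a generator appropriately, using graded regularity to split off a cyclic graded-simple-like piece so that the map lands in $J$; here one must keep careful track of the degree shift, so the map naturally lives in $\HOM(eR,J)$ rather than $\Hom$, which is exactly why the paper introduced $\HOM$. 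For $(e)\Rightarrow(a)$, I would argue contrapositively: if $e$ is not graded faithful, there is a nonzero $f\in B^{\gr}(R)$ with $ef=0$; then taking $J=fR$, any graded homomorphism $eR\to fR$ must vanish because $f$ is central and $ef=0$ kills $eR\cdot f = eRf = efR = 0$, contradicting $(e)$.

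The main obstacle will be the construction in $(d)\Rightarrow(e)$: producing an \emph{honest graded} homomorphism $eR\to J$ of some homogeneous degree from the mere existence of a homogeneous element not annihilated by $eR$. The subtlety is that graded right faithfulness only guarantees a nonzero product, and one must upgrade this to a module map respecting the grading, which requires invoking graded regularity to find a splitting idempotent of the correct degree (so that the resulting map has well-defined support), and possibly passing through a suspension $eR(\alpha)$ to match the degree of the chosen element of $J$. Apart from this, the remaining implications are routine once the graded versions of Proposition~\ref{9.5} and graded nonsingularity (available since $R$ is graded regular self-injective) are in hand.
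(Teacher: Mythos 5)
Your skeleton is the same cycle the paper runs, and two of its links match the paper's proof essentially verbatim: (a)$\Rightarrow$(b) via Proposition~\ref{9.5} and the central idempotent, and (e)$\Rightarrow$(a) via $J=fR$ with $\theta(e)=\theta(e)e\in fRe=0$. But your (b)$\Rightarrow$(c) is genuinely wrong, on two counts. First, the handedness: ``$Re$ is graded left faithful'' means the \emph{left annihilator} of the left module $Re$ vanishes, i.e.\ $xRe=0$ for homogeneous $x$ forces $x=0$ (the paper's proof produces, for each nonzero homogeneous $x$, a homogeneous $a\in Re$ with $xa\neq 0$). Your condition ``$Rer=0\Rightarrow r=0$'' is equivalent to ``$er\neq 0$ for every nonzero homogeneous $r$,'' which fails for \emph{every} homogeneous idempotent $e\neq 1$: take $r=1-e$. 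Concretely, in $\Ma_2(K)$ with trivial grading, $e=e_{11}$ satisfies (a)--(e) of the lemma, yet $e_{11}e_{22}=0$. Second, the inference ``$Rer=0$ then $ReRr=0$'' is false ($er=0$ does not give $eRr=0$; again $e_{11}e_{12}e_{22}=e_{12}\neq0$), and even for the two-sided ideal $ReR$, essentiality as a \emph{right} ideal plus graded nonsingularity only kills \emph{left} annihilators: $x(ReR)=0$ puts $ReR$ inside the right annihilator of $x$, so $x\in Z^{\gr}(R)=0$. Killing right annihilators of $ReR$ requires first flipping sides via graded semiprimeness of graded regular rings. With the correct handedness, (b)$\Rightarrow$(c) is the one-line argument just given: $xRe=0$ implies $x(ReR)=0$, done.

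The side-flip you wave at as ``a symmetric left/right statement'' is exactly where the paper does real work, in (c)$\Rightarrow$(d): it forms the graded left ideal $K$ of elements all of whose homogeneous components $x$ satisfy $eRx=0$, notes $eK=0$, deduces $(KRe)^2=KR(eK)Re=0$, invokes graded semiprimeness to get $KRe=0$, and then applies (c) to conclude $K=0$. Your alternative plan --- prove (c) and (d) are each equivalent to the central cover of $ReR$ being $1$ --- can be made to work (both amount to $\operatorname{ann}(ReR)=0$, using that left and right annihilators of a two-sided ideal agree in a semiprime ring), but as written it is a placeholder, not a proof, and note that Proposition~\ref{9.5} as stated is right-handed, so the semiprimeness input is unavoidable. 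Likewise in (d)$\Rightarrow$(e) you correctly diagnose the obstacle (degree shifts, hence $\HOM$, and the need for a splitting idempotent) but never build the map; the paper's construction is short: pick homogeneous $y\in eR$ with $yx\neq0$, observe $yxR\subseteq eR$ is a graded direct summand of $R$ by graded regularity, split the graded epimorphism $xR\rightarrow yxR$, $xa\mapsto yxa$, and extend the splitting $yxR\rightarrow xR\subseteq J$ by zero on a graded complement of $yxR$ in $eR$ to get a nonzero element of $\HOM(eR,J)$. So: right architecture, but (b)$\Rightarrow$(c) must be redone with the annihilators on the correct sides, and (c)$\Rightarrow$(d), (d)$\Rightarrow$(e) need their actual arguments supplied.
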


\begin{proof}
The proof similar to the one in \cite{G}, after minor modification
to handle the graded version, works here. For the sake of completeness, we
outline the proof.

Assume (a). By Proposition \ref{9.5}, $(ReR)_{R}\leq_{e}^{\gr}(fR)_{R}$ for
some homogeneous idempotent $f\in B^{\gr}(R)$. Now $(1-f)e=0$ and so $1-f=0$.
Consequently, $(ReR)_{R}\leq_{e}^{\gr}R_{R}$, thus proving (b).

Assume (b). Now $R$ is graded non-singular. Since $(ReR)_{R}$ is graded
essential right ideal, it is then clear that for any homogeneous element $x\in
R$, $x(ReR)\neq0$. In particular, $xRe\neq0$ and it is then clear there is a
homogeneous element $a\in Re$ such that $xa\neq0$.

Assume (c). Let 
\begin{multline}
K=\big \{x\in R \mid  x=x_{i_{1}}+\cdot\cdot\cdot+x_{i_{n}} \text{ is 
the homogeneous decomposition of } x \text{ and } \\ eRx_{i_{k}}=0 \text{ for all }
k=1,\cdot\cdot\cdot,n \big \}.
\end{multline}
 It is easy to see that $K$ is a graded left ideal
of $R$ and $eK=0$. So $(KRe)^{2}=KR(eK)Re=0$. Consequently, $KRe=0$. By (c),
$K=0$. Hence $eR$ is faithful, thus proving (d).

Assume (d). Let $J$ be any graded right ideal and let $0\neq x\in J$ be a
homogeneous element. Since $eR$ is graded faithful, there is a homogeneous
element $y\in eR$ such that $yx\neq0$. Thus $yxR$ is a non-zero graded
projective (principal) right ideal of $R$. Then the graded epimorphism
$xR\rightarrow yxR$ given by $xa\mapsto yxa$ splits with the split map
$\phi:yxr\rightarrow xR$. Since $yxR$ is a graded direct summand, $\phi$
extends to a non-zero graded morphism $\theta:eR\rightarrow J$. Hence
$\HOM(eR,J)\neq0$, thus proving (e).

Assume (e). Suppose $f\in B(R)$ is a homogeneous central idempotent such that
$ef=0$. Then for any graded morphism $\theta:eR\rightarrow fR$, we have
$\theta(ea)=\theta(e)a=fba=bfe=0$. This contradicts (d), as $\HOM(eR,fR)\neq0$ for non-zero $fR$. Hence $f=0$ and we conclude that $e$ is a
graded faithful idempotent. This proves (a).
\end{proof}

\begin{theorem}\label{rgagaga}
\label{typeI} For a graded regular graded right self-injective ring, the following are equivalent.

\begin{enumerate}[\upshape(a)]
\item $R$ has type I;

\medskip

\item Every non-zero graded right ideal of $R$ contains a non-zero graded abelian idempotent;

\medskip

\item The graded two-sided ideal generated by all the graded abelian
idempotents in $R$ is graded essential as a graded right ideal.
\end{enumerate}
\end{theorem}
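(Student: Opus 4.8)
The plan is to run the cycle (a) $\Rightarrow$ (b) $\Rightarrow$ (c) $\Rightarrow$ (a), following the template of Goodearl's nongraded Theorem 10.22 but tracking suspensions, which intrude because homogeneous homomorphisms of graded modules may carry nonzero degree. Two auxiliary facts will carry most of the load. The first is a \emph{closure lemma}: if $e$ is a graded abelian idempotent and $g$ is a homogeneous idempotent such that $gR$ is graded isomorphic, up to a suspension, to a graded submodule of $eR$, then $g$ is again graded abelian. The second is that graded abelianness of $e$ is a property of the graded ring $eRe$ alone, hence is invariant under suspension of $eR$. Both rest on the characterization that $e$ is graded abelian exactly when $eR$ contains no pair of nonzero orthogonal graded-isomorphic summands (the graded analogue of Goodearl's description of abelian idempotents), and the nonsingularity of $R$ (graded regular rings are graded nonsingular).

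For (a) $\Rightarrow$ (b), suppose $e$ is a graded faithful abelian idempotent and let $J$ be a nonzero graded right ideal. By Lemma~\ref{10.1}, (a)$\Rightarrow$(e), $\HOM(eR,J)\neq 0$, so there is a nonzero homogeneous map $\alpha\colon eR\to J$. Choosing a nonzero homogeneous $x\in\alpha(eR)$ and invoking graded regularity, I write $xR=gR$ with $g$ a homogeneous idempotent in $J$. The pullback $\alpha^{-1}(gR)\leq^{\gr} eR$ surjects onto $gR$, and since $gR$ is graded projective this surjection splits; thus $gR$ is graded isomorphic, possibly after a suspension, to a graded summand of a submodule of $eR$. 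The closure lemma forces $g$ to be graded abelian, so $g\in J$ is the sought idempotent. The implication (b) $\Rightarrow$ (c) is then immediate: if every nonzero graded right ideal contains a nonzero graded abelian idempotent, the two-sided ideal $I$ generated by all graded abelian idempotents meets every nonzero graded right ideal, so $I\leq_e^{\gr}R_R$.

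The substantial step is (c) $\Rightarrow$ (a), where I must manufacture a single graded faithful abelian idempotent. Call two graded abelian idempotents \emph{centrally orthogonal} when the two-sided ideals they generate intersect trivially, and fix by Zorn's lemma a maximal family $\{a_i\}$ of pairwise centrally orthogonal nonzero graded abelian idempotents. Central orthogonality gives $a_iRa_j=0$ for $i\neq j$, so the $a_i$ are orthogonal and $\HOM(a_iR,a_jR)=0$. By Proposition~\ref{ds} the graded injective hull of $\bigoplus_i a_iR$ is a graded direct summand $eR$ of $R$ with $e$ a homogeneous idempotent of degree $0$. Using that each $a_iR$ is injective and that $R$ is graded nonsingular, one shows every graded endomorphism of $eR$ preserves each $a_iR$, whence $eRe=\operatorname{END}(eR)\cong\prod_i a_iRa_i$, a product of graded abelian rings, so $e$ is graded abelian. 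For faithfulness, Proposition~\ref{9.5} gives a central homogeneous idempotent $h$ with $\sum_i Ra_iR\leq_e^{\gr}hR$; were $e$ not faithful there would be a nonzero graded right ideal disjoint from $\sum_i Ra_iR$, hence lying in $(1-h)R$, and applying (c) together with the extraction argument of (a) $\Rightarrow$ (b) would yield a nonzero graded abelian idempotent centrally orthogonal to every $a_i$, contradicting maximality. Thus $(ReR)_R\leq_e^{\gr}R_R$ and $e$ is graded faithful by Lemma~\ref{10.1}, so $R$ is of gr-Type I.

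I expect the main obstacle to be the assembly in (c) $\Rightarrow$ (a): verifying that the graded injective hull of the orthogonal family is generated by a genuine homogeneous idempotent and, more delicately, that its corner ring is exactly $\prod_i a_iRa_i$, so that graded abelianness survives the passage to the hull. This is where the essentiality of $\bigoplus_i a_iR$ in $eR$, the injectivity of the $a_iR$, and graded nonsingularity must be combined to squeeze each endomorphism of $eR$ back into the individual summands. The intrusion of suspensions into the vanishing $\HOM(a_iR,a_jR)=0$ and into the splitting used in (a) $\Rightarrow$ (b) is precisely what makes the suspension-invariance of abelianness recorded at the outset indispensable, and it is the point at which the graded argument departs most sharply from Goodearl's.
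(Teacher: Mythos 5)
Your proposal is correct, but it closes the implication cycle by a genuinely different assembly than the paper. Your (a)$\Rightarrow$(b) and the extraction you reuse inside (c)$\Rightarrow$(a) are essentially the paper's (a)$\Rightarrow$(b) and (c)$\Rightarrow$(b): both split a nonzero graded map $eR\to J$ over its graded projective image $fR$ to obtain $fRf\cong_{\gr}gRg\subseteq eRe$, and both rest on the closure fact that graded abelianness passes to corners graded-isomorphic to summands of $eR$, with suspensions harmless because $\operatorname{END}$ is suspension-invariant. The real divergence is how the faithful abelian idempotent is manufactured. The paper proves (b)$\Rightarrow$(a) lattice-theoretically: using Lemma~\ref{zerocomponent} and graded versions of \cite[Propositions 9.9 and 9.10]{G}, it works in the complete Boolean algebra $B(R_0)$, shows the supremum of the central homogeneous idempotents $e$ with $eR$ of type I equals $1$, takes a maximal orthogonal family $\{e_i\}$ in that set, identifies $R\cong_{\gr}\prod_i e_iR$, and assembles the faithful abelian idempotent coordinatewise from faithful abelian idempotents $h_i\in e_iR$. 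You instead prove (c)$\Rightarrow$(a) directly: a maximal centrally orthogonal family $\{a_i\}$ of graded abelian idempotents, the graded hull $eR$ of $\bigoplus_i a_iR$ via Proposition~\ref{ds}, abelianness of $e$ from endomorphisms of $eR$ preserving each $a_iR$, and faithfulness from maximality via Proposition~\ref{9.5} and Lemma~\ref{10.1}. Your route buys a single explicit faithful abelian idempotent without invoking completeness of $B(R_0)$ or the product decomposition, at the price of the verifications you rightly flag, all of which do go through: pairwise central orthogonality makes the ideals $Ra_iR$ independent because graded regular rings are graded semiprime (the square of $Ra_iR\cap\sum_{j\neq i}Ra_jR$ vanishes); writing $eR=a_iR\oplus C_i$ with $\bigoplus_{j\neq i}a_jR\leq_{e}^{\gr}C_i$, a nonzero component map $a_iR\to C_i$ would, by graded nonsingularity and injectivity of the summands $a_jR$, produce a nonzero element of $\HOM(a_iR,a_jR)\cong a_jRa_i=0$; and for abelianness of $e$ you only need homogeneous idempotents of $eRe$ to commute with homogeneous elements, which follows from these restrictions together with nonsingularity. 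One small imprecision: your literal claim $eRe\cong\prod_i a_iRa_i$ should be read with the product replaced by its graded part, since an infinite product of graded rings is not $\Gamma$-graded in the naive sense --- a caveat that the paper's own appeal to a graded version of \cite[Proposition 9.10]{G} equally incurs.
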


\begin{proof}
Assume (a), so $R$ contains a non-zero graded faithful abelian idempotent
$e$. Let $J$ be a non-zero graded right ideal of $R$. By Lemma \ref{10.1},
there is a non-zero graded morphism $\theta:eR\rightarrow J$. 

Now $J^{\prime}=\theta(eR)=\theta(e)R\subseteq J$ is a graded principal
right ideal of $R$ and so $J^{\prime}=fR$ for some homogeneous idempotent $f
$. Since $fR$ is graded projective, there is a graded morphism $\phi
:fR\rightarrow eR$ such that $\theta\phi=1_{fR}$. Then $g=\phi\theta\in
\End(eR)=$ $eRe$ and $g=ege$ is a homogeneous idempotent such that
$gR\cong_{\gr}fR$. In particular, $eR=(\ker\theta)\oplus gR$. Consequently,
$fRf\cong_{\gr}gRg\subseteqq$ $eRe$. Thus $f$ is a graded homogeneous
idempotent belonging to $J$. This proves (b). 


\medskip 

Assume (b). Let \[X=\big \{ \text{homogeneous idempotents } e\in B(R) \text{ such that } eR \text{ is
type I}\big \}.\]  Note that $X\subseteq B^{\gr}(R)\cap R_{0}\subseteq B(R_{0})$ and, by
Lemma \ref{zerocomponent} and \cite[Proposition 9.9]{G}, $B(R_{0})$
is a complete lattice. Let $\sup(X)=f\in B(R_{0})$. We claim $f=1$. Suppose
$f\neq1$. Then $1-f\in R_{0}$ is a homogeneous idempotent and, by (b),
$(1-f)R$ contains a homogeneous abelian idempotent $g$. By Proposition
\ref{9.5}, $RgR\leq_{e}^{\gr}eR$ for a unique homogeneous $e\in B^{\gr}(R)$. We
then appeal to Lemma \ref{10.1} to conclude that $g$ is faithful in the ring
$eR$. Hence $eR$ is of type $I$. Thus $e\in X$ and so $e\leq f$, that is
$e=fe$. Hence $g\in fR$. But $g\in(1-f)R$ and so $g=0$, a contradiction. Hence
$f=1$.

Let $Y\subseteq X\subseteq B(R_{0})$ be a maximal family of mutually
orthogonal homogeneous idempotents. Let $\epsilon=\sup(Y)$. Now $\epsilon=1$,
since otherwise $Y\cup\{1-\epsilon\}$ will contradict the maximality of $\ X$.
Write $Y=\{e_{i}:i\in I\}$. Since $e_{i}\in X$, $e_{i}R$ contains a
homogeneous faithful abelian idempotent $h_{i}$. Then $h= \langle\cdot\cdot
\cdot,h_{i},\cdot\cdot\cdot \rangle\in S=%
{\displaystyle\prod\limits_{i\in I}}
e_{i}R$ is an idempotent which is clearly faithful. Also if $\epsilon
=\langle \cdot\cdot\cdot,\epsilon_{i},\cdot\cdot\cdot \rangle$ is a homogeneous idempotent
in $h(%
{\displaystyle\prod\limits_{i\in I}}
e_{i}R)h=$ $%
{\displaystyle\prod\limits_{i\in I}}
h_{i}(e_{i}R)h_{i}$, then since for each $i$, $\epsilon_{i}$ is central in
$h_{i}(e_{i}R)h_{i}$, $\epsilon$ is central in $h(%
{\displaystyle\prod\limits_{i\in I}}
e_{i}R)h$. Thus $h$ is abelian and so $%
{\displaystyle\prod\limits_{i\in I}}
e_{i}R$ is type I. \ Since $\sup\{e_{i}:i\in I\}=1$, we appeal to the graded
version of \cite[Proposition 9.10]{G} to conclude that the
natural ring map $R\rightarrow%
{\displaystyle\prod\limits_{i\in I}}
e_{i}R$ is a graded isomorphism. Hence $R$ is of type I, thus proving (a).

Assume (c). Then the graded two-sided ideal $T$ generated by all the graded abelian idempotents in $R$ is graded essential as a graded right
ideal in $R$. Let $x$ be a non-zero homogeneous element in $R$. Since $R$ is
graded non-singular, $xT\neq0$. This implies that $xre\neq0$ for some graded
abelian idempotent and some homogeneous element $r\in R$. (because, $xre=0$
for all such $e$ and for all homogeneous $r\in R$ will imply that $xa=0$ for
all $a\in T$). Then the graded cyclic right ideal $xreR=fR$ for some
homogeneous idempotent $f\in xR$. Now the map $\theta:ey\mapsto xrey$ is a
graded non-zero morphism from $eR$ to $xR$ with image $xreR=fR$ and since $fR$
is graded projective,  there is a graded morphism $\phi:fR\rightarrow eR$ such that $\theta
\phi=1_{fR}$. Then proceeding as in the proof of (a) $\Rightarrow$  (b), $g=\phi\theta$ is
a homogeneous idempotent in $eRe$ with $gR\cong_{\gr}fR$. So $fRf\cong%
_{\gr}gRg\subseteqq$ $eRe$. This shows that $f\in xR$ is a homogeneous abelian idempotent.  This proves (b).

Now (b) obviously implies (c).
\end{proof}

\begin{theorem}\label{rgagaga1}
Let $R$ be a graded regular graded right self-injective ring. Then
the following properties are equivalent:

\begin{enumerate}[\upshape(a)]

\item $R$ has type II;

\medskip

\item Every non-zero graded right ideal of $R$ contains a non-zero graded
directly finite idempotent;

\medskip

\item The graded two-sided ideal generated by all the graded directly finite
idempotents of $R$ is graded essential as a right ideal of $R$.

\end{enumerate}
\end{theorem}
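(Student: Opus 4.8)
The plan is to run the proof of Theorem~\ref{typeI} essentially unchanged, with ``graded abelian idempotent'' everywhere replaced by ``graded directly finite idempotent,'' using in place of the corner-stability of abelian-ness the fact that graded direct-finiteness is inherited by graded direct summands and is invariant under suspension. Concretely, if $e$ is a graded directly finite homogeneous idempotent and $gR$ is a graded direct summand of $eR$ with $g$ homogeneous, then (using the characterization that $eR$ is not graded isomorphic to a proper graded direct summand of itself) $gR$ is again graded directly finite; and if $fR$ is graded isomorphic to a suspension $(gR)(\gamma)$ of $gR$, then $f$ is graded directly finite as well. Both facts are immediate from Definition~\ref{jjuuii} together with \cite[Proposition~3.2]{HRS-1}.

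For (a) $\Rightarrow$ (b): let $e$ be a graded faithful directly finite idempotent and $J\neq0$ a graded right ideal. By Lemma~\ref{10.1}(e) there is a nonzero homogeneous $\theta\in\HOM(eR,J)$, say of degree $\gamma$; its image $\theta(e)R$ is a graded principal right ideal, hence --- $R$ being graded regular --- a graded direct summand $fR$ with $f\in J$ a homogeneous idempotent. Since $fR$ is graded projective the surjection splits, giving $\phi$ with $\theta\phi=1_{fR}$, and $g=\phi\theta\in(eRe)_0$ is a degree-zero homogeneous idempotent with $eR=\ker\theta\oplus gR$ and $gR\cong_{\gr}(fR)(\gamma)$, so that $fR$ is graded isomorphic to a suspension of $gR$. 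By the two facts above, $g$, and hence $f$, is graded directly finite. This gives (b).

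For (b) $\Rightarrow$ (a): I reproduce the two-stage supremum argument of Theorem~\ref{typeI}. Put $X=\{\text{homogeneous } e\in B(R) : eR \text{ is of gr-Type II}\}$, so $X\subseteq B^{\gr}(R)\cap R_0\subseteq B(R_0)$, a complete lattice by Lemma~\ref{zerocomponent} and \cite[Proposition~9.9]{G}. If $f=\sup(X)\neq1$, then by (b) the corner $(1-f)R$ contains a nonzero graded directly finite idempotent $g$; Proposition~\ref{9.5} gives a unique $e\in B^{\gr}(R)$ with $RgR\leq_{e}^{\gr}eR$, and Lemma~\ref{10.1} shows $g$ is faithful in $eR$, so $eR$ is of gr-Type II, forcing $e\in X$, $e\leq f$, and the contradiction $0\neq g\in fR\cap(1-f)R=0$. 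Hence $\sup(X)=1$; choosing a maximal orthogonal family $Y=\{e_i\}_{i\in I}\subseteq X$ with $\sup(Y)=1$, a faithful directly finite idempotent $h_i\in e_iR$ for each $i$, and assembling $h=\langle h_i\rangle\in\prod_{i\in I}e_iR$, one checks exactly as in Theorem~\ref{typeI} that $h$ is faithful and $\prod_{i\in I}e_iR$ is of gr-Type II; the graded version of \cite[Proposition~9.10]{G} then identifies $R\cong_{\gr}\prod_{i\in I}e_iR$, so $R$ is of gr-Type II. Finally, (b) $\Leftrightarrow$ (c) is verbatim the corresponding part of Theorem~\ref{typeI}: (b) $\Rightarrow$ (c) is trivial, and for (c) $\Rightarrow$ (b) one uses graded non-singularity and essentiality of the ideal generated by the directly finite idempotents to produce, for each nonzero homogeneous $x$, an expression $xreR=fR$ and a splitting of $\theta:ey\mapsto xrey$, whence $f$ is a graded directly finite idempotent in $xR$.

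The step I expect to be the main obstacle is the verification inside (b) $\Rightarrow$ (a) that the relevant corners $eR$ are genuinely of gr-Type II and not merely in possession of a faithful directly finite idempotent: gr-Type II demands in addition that $eR$ contain \emph{no} nonzero graded abelian idempotent. This clause --- which must be read into hypotheses (b) and (c), exactly as in the nongraded \cite[Proposition~10.8]{G} --- is the one genuine divergence from the Type I argument, and it has to be propagated through the product $\prod_{i\in I}e_iR$ (it is preserved there for the same reason direct-finiteness is). A secondary, purely graded, point is the bookkeeping of suspensions: the maps $\theta,\phi$ above are homogeneous of nonzero degree, so the isomorphisms produced are of the form $fR\cong_{\gr}(gR)(\gamma)$ rather than $fR\cong_{\gr}gR$; this is harmless precisely because graded direct-finiteness is suspension-invariant, but it is the kind of subtlety absent from the classical proof and is why one works with $\operatorname{END}_R$ and Definition~\ref{jjuuii} throughout.
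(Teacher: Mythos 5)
Your proposal is correct and takes essentially the same route as the paper: the paper's proof of Theorem~\ref{rgagaga1} consists precisely of the instruction to rerun the argument of Theorem~\ref{rgagaga} with graded abelian idempotents replaced by graded directly finite ones, invoking exactly the corner-stability fact you isolate (if $e$ is graded directly finite, then any homogeneous idempotent $f\in eRe$ is graded directly finite), and your suspension bookkeeping matches what the Type~I proof already does. Your observation that the ``no nonzero graded abelian idempotents'' clause of gr-Type~II must be read into conditions (b) and (c), as in the nongraded \cite[Proposition~10.8]{G}, is a fair point of precision that the paper's one-line proof leaves implicit.
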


\begin{proof} 
The proof follows the same line of the proof of Theorem~\ref{rgagaga} by using the
fact that if $e$ is a graded directly finite idempotent, then any homogeneous
idempotent $f\in eRe$ is also graded directly finite.
\end{proof}

\begin{theorem}
\label{mainth} Let $R$ be a $\Gamma$-graded regular graded right
self-injective ring. Then $R \cong_{\operatorname{gr}} R_{1}\times R_{2}
\times R_{3}$, where $R_{1}, R_2, R_3$ are $\Gamma$-graded rings of gr-Type I, II and
III, respectively. Furthermore, this decomposition is unique.
\end{theorem}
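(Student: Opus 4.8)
The plan is to transcribe Goodearl's proof of \cite[Theorem 10.13]{G} into the graded language, producing two orthogonal central homogeneous idempotents $e_{1},e_{2}\in B^{\gr}(R)$ by means of Proposition~\ref{9.5} and then setting $R_{1}=e_{1}R$, $R_{2}=e_{2}R$, $R_{3}=(1-e_{1}-e_{2})R$. Two standing facts drive everything. First, a graded corner $cR$ with $c\in B^{\gr}(R)$ is again graded regular and graded right self-injective, so the characterizations of Theorems~\ref{typeI} and~\ref{rgagaga1} may be applied inside each factor. Second, for central $c$ and a homogeneous idempotent $g\in cR$ one has $gRg=g(cR)g$, so ``graded abelian'' and ``graded directly finite'' for $g$ mean the same whether computed in $cR$ or in $R$; in particular these notions are inherited by, and detected in, central corners.

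First I would let $A$ be the graded two-sided ideal generated by all graded abelian idempotents of $R$ and apply Proposition~\ref{9.5}(a) to get the unique $e_{1}\in B^{\gr}(R)$ with $A\leq_{e}^{\gr}e_{1}R$. Since $e_{1}$ is central and $A\subseteq e_{1}R$, the abelian idempotents generating $A$ lie in $e_{1}R$ and stay abelian there, so the ideal of $e_{1}R$ generated by its graded abelian idempotents contains the essential ideal $A$; by Theorem~\ref{typeI}(c) the factor $e_{1}R$ is of gr-Type~I. The complement $R'=(1-e_{1})R$ then has \emph{no} nonzero graded abelian idempotent: any such $g\in R'$ is abelian in $R$, hence $g\in A\subseteq e_{1}R$, so $e_{1}g=g$, while $g\in R'$ forces $e_{1}g=0$ and thus $g=0$.

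Next I would run the same construction one level down inside $R'$. Let $D'$ be the graded two-sided ideal of $R'$ generated by its graded directly finite idempotents, and by Proposition~\ref{9.5}(a) applied to $R'$ choose the central idempotent $e_{2}\in B^{\gr}(R')\subseteq B^{\gr}(R)$, orthogonal to $e_{1}$, with $D'\leq_{e}^{\gr}e_{2}R'$. As before $e_{2}R=e_{2}R'$ carries an essential ideal generated by graded directly finite idempotents, so Theorem~\ref{rgagaga1}(c) makes it gr-Type~II (it has no nonzero graded abelian idempotent, being a corner of $R'$). Finally $R_{3}=(1-e_{1}-e_{2})R=(1-e_{2})R'$ has no nonzero graded directly finite idempotent: such a $g$ would lie in $D'\subseteq e_{2}R'$ and in $(1-e_{2})R'$ at once, hence $g=0$; so $R_{3}$ is of gr-Type~III. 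With $e_{3}=1-e_{1}-e_{2}$ the orthogonal central idempotents $e_{1},e_{2},e_{3}$ give $R\cong_{\gr}R_{1}\times R_{2}\times R_{3}$.

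For uniqueness I would use two observations: no nonzero graded ring is simultaneously of two distinct gr-Types (gr-Type~I has a nonzero graded abelian, hence directly finite, idempotent, forbidden in types~II and~III; gr-Type~II has a nonzero graded directly finite idempotent, forbidden in type~III), and a nonzero central corner $cS$ of a gr-Type~$i$ ring $S$ is again gr-Type~$i$ (for $i=\mathrm{I},\mathrm{II}$ intersect the essential ideal of abelian, resp.\ directly finite, idempotents with $cS$; for $i=\mathrm{III}$ the property passes to the subring $cS$ directly). Then, given a second decomposition $R\cong_{\gr}S_{1}\times S_{2}\times S_{3}$ arising from central idempotents $f_{1},f_{2},f_{3}$, each $e_{i}f_{j}R$ with $i\neq j$ is a common central corner of a type-$i$ and a type-$j$ ring, hence $0$, forcing $e_{i}f_{j}=0$ and therefore $e_{i}=f_{i}$. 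The main obstacle is purely the graded bookkeeping: one must be sure that Proposition~\ref{9.5} yields \emph{homogeneous} central idempotents of degree $0$ and that passing to a central corner preserves ``graded regular graded right self-injective'' along with the graded type characterizations. Once these graded analogues are secured the combinatorics matches the classical argument verbatim, and because only a finite ($3$-fold) product is involved, no module suspensions enter the picture.
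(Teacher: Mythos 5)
Your proposal is correct and takes essentially the same route as the paper's own proof: the paper likewise applies Proposition~\ref{whyheadij} to the ideal generated by the graded abelian idempotents to obtain $e_1$, repeats this inside $(1-e_1)R$ with the graded directly finite idempotents to obtain $e_2$, identifies the types of the three corners via Theorems~\ref{rgagaga} and~\ref{rgagaga1}, and proves uniqueness by showing $e_i f_j=0$ for $i\neq j$. Your only deviation is to spell out the type-exclusivity and central-corner-inheritance facts that the paper compresses into the terse assertion $e_1Re_1\cap f_2Rf_2=0$, which is a welcome clarification rather than a different argument.
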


\begin{proof}
Let $X$ be the set of all graded abelian idempotents in
$R$. By Proposition~\ref{whyheadij}, there is a unique homogeneous idempotent $e_{1}\in
B^{\gr}(R)$ such that $(RXR)_{R}\leq_{e}^{\gr}e_{1}R$. Theorem~\ref{rgagaga} implies that the
graded regular graded right self-injective ring $e_{1}R$ is of type I. Now the
ring $(1-e_{1})R$ has no non-zero abelian idempotents since \ $(1-e_{1})R\cap
X=\emptyset$. Let $Y$ denote the set of all graded directly finite idempotents
in $(1-e_{1})R$. Again, by Proposition~\ref{whyheadij}, there exists a unique homogeneous
idempotent $e_{2}$ $\in B^{\gr}((1-e_{1})R)$ such that $(RYR)_{R}\leq_{e}^{\gr}e_{2}%
R$. Theorem~\ref{rgagaga1} implies that the ring $e_{2}R$ is of type II. Let
$e_{3}=1-e_{1}-e_{2}$. Since $e_{3}R\cap Y=\emptyset$, the ring $e_{3}R$
contains no non-zero directly finite graded idempotents and consequently, $e_{3}R$ is a graded
regular graded right self-injective ring of type III. Clearly $e_{1}%
,e_{2},e_{3}$ are homogeneous orthogonal central idempotents with $e_{1}%
+e_{2}+e_{3}=1$. Consequently, $R=_{\gr}e_{1}R$ $\oplus e_{2}R\oplus e_{3}R$.

To prove the uniqueness of the decomposition, suppose $f_{1},f_{2},f_{3}$ are
central orthogonal idempotents such that $f_{1}+f_{2}+f_{3}=1$ and that
$f_{1}R,f_{2}R,f_{3}R$ are of type I, II, III respectively. Now $e_{1}%
=e_{1}f_{1}+e_{1}f_{2}+e_{1}f_{3}$. Here $e_{1}f_{2}\in e_{1}Re_{1}\cap
f_{2}Rf_{2}=0$ and $e_{1}f_{3}\in e_{1}Re_{1}\cap f_{3}Rf_{3}=0$.
Consequently, $e_{1}=e_{1}f_{1}\in f_{1}R$ and so $e_{1}R\subseteq f_{1}R$.
Similarly, $f_{1}R\subseteq e_{1}R$ and so $e_{1}R=f_{1}R$.
In the same way, $e_{2}R=f_{2}R$ and $e_{3}R=f_{3}R$. This finishes the proof. 
\end{proof}

\section{Applications to Leavitt path algebras}

 In this section we apply the results from the previous section to the case of Leavitt path algebras. We first recall the basics of Leavitt path algebras. Let $K$ be a field and $E$ be an arbitrary directed graph. Let $E^0$ be the set of vertices, and $E^1$ be the set of edges of directed graph $E$. Consider two maps $r: E^1 \rightarrow E^0$ and $s:E^1 \rightarrow E^0$. For any edge $e$ in $E^1$, $s(e)$ is called the {\it source} of $e$ and $r(e)$ is called the {\it range} of $e$. If $e$ is an edge starting from vertex $v$ and pointing toward vertex $w$, then we imagine an edge starting from vertex $w$ and pointing toward vertex $v$ and call it the {\it ghost edge} of $e$ and denote it by $e^*$. We denote by $(E^1)^*$, the set of all ghost edges of directed graph $E$. If $v \in E^0$ does not emit any edges, i.e. $s^{-1}(v) = \emptyset$, then $v$ is called a {\it sink} and if $v$ emits an infinite number of edges, i.e. $|s^{-1}(v)| = \infty$, then $v$ is called an {\it infinite emitter}. If a vertex $v$ is neither a sink nor an infinite emitter, then $v$ is called a {\it regular vertex}. 

\bigskip

\noindent The {\it Leavitt path algebra} of $E$ with coefficients in $K$, denoted by $L_K(E)$, is the $K$-algebra generated by the sets $E^0$, $E^1$, and $(E^1)^*$, subject to the following conditions:
	\begin{enumerate}
		\item[(A1)] $v_iv_j = \delta_{ij} v_i$ for all $v_i, v_j \in E^0$.
		\item[(A2)] $s(e)e = e = er(e)$ and $r(e)e^* = e^* = e^*s(e)$ for all $e$ in $E^1$.
		\item[(CK1)] $e_i^*e_j = \delta_{ij} r(e_i)$ for all $e_i, e_j \in E^1$.
		\item[(CK2)]  If $v \in E^0$ is any regular vertex, then $v = \sum_{\{e \in E^1: s(e) = v\}} ee^*$.
	\end{enumerate}
Conditions (CK1) and (CK2) are known as the {\it Cuntz-Krieger relations}. If $E^0$ is finite, then $\sum\limits_{v_i \in E^0} v_i$ is an identity for $L_K(E)$ and if $E^0$ is infinite, then $E^0$ generates a set of local units for $L_K(E)$. 	We refer the reader to \cite{AAS-1} for the basics on the theory of Leavitt path algebras. 

It is known that the Leavitt path algebra $L_K(E)$ over any graph $E$ is a graded regular ring~\cite{H-2}. As an application of the structure theory of graded regular graded self-injective rings, we have the following in case of unital Leavitt path algebras.

\begin{theorem}\label{vonmimi}
Let $L_{K}(E)$ be a Leavitt path algebra associated to a finite graph $E$,
where $K$ is a field. Then the following are equivalent.

\begin{enumerate}[\upshape(a)]

\item $L_{K}(E)$ is an algebra of gr-Type I;

\medskip 

\item $L_{K}(E)$ is graded self-injective;

\medskip

\item No cycle in $E$ has an exit;

\medskip

\item $L_K(E)$ is a graded $\Sigma$-$V$ ring;

\medskip 

\item There is a graded isomorphism
\[
L_{K}(E)\cong_{\operatorname{gr}} {\displaystyle\bigoplus\limits_{v_{i}\in X}}
\Ma_{\Lambda_{i}}(K)((|\overline{p^{v_{i}}}|)\oplus{\displaystyle\bigoplus
\limits_{w_{j}\in Y}} \Ma_{\Upsilon_{j}}(K[x^{t_{j}},x^{-t_{j}}%
])(|\overline{{q^{w_{j}}}}|)
\]
where $\Lambda_{i}$,$\Upsilon_{j}$ are suitable index sets, the $t_{j}$ are
positive integers, $X$ is the set of sinks in $E$ and $Y$ is the set of all distinct cycles
(without exits) in $E$.

\end{enumerate}
\end{theorem}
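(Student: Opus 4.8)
The plan is to prove the cyclic chain of implications $(a) \Rightarrow (b) \Rightarrow (c) \Rightarrow (e) \Rightarrow (a)$, and then treat $(d)$ separately by showing, say, $(c) \Leftrightarrow (d)$. The backbone of the argument is the structural dichotomy for Leavitt path algebras governed by the Condition that no cycle has an exit: when this holds, $L_K(E)$ has an especially rigid graded structure. For a finite graph, the classical result on Leavitt path algebras whose cycles have no exits gives exactly the explicit matrix decomposition in $(e)$, so establishing $(c) \Rightarrow (e)$ and $(e) \Rightarrow (c)$ amounts to invoking that structure theorem, keeping careful track of the grading shifts recorded in equations~\eqref{one} and~\eqref{two}: the suspensions $(|\overline{p^{v_i}}|)$ and $(|\overline{q^{w_j}}|)$ encode the degrees $\delta_i$ coming from the lengths of paths into the sinks and cycles.

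First I would handle $(e) \Rightarrow (a)$, which is the cleanest use of the preceding machinery. Each graded matrix ring $\Ma_{\Lambda_i}(K)(\cdots)$ is graded isomorphic to $\operatorname{END}_D(A)$ for the graded division ring $D = K$ (a trivially graded field) and a suitable graded $D$-module $A$; likewise each $\Ma_{\Upsilon_j}(K[x^{t_j},x^{-t_j}])(\cdots)$ is $\operatorname{END}_D(A)$ for the graded division ring $D = K[x^{t_j},x^{-t_j}]$ graded by $t_j\Bbb Z$. By Proposition~\ref{primregselfinjtype1}, each such summand is graded prime, graded regular, self-injective, and of gr-Type I. Since a finite direct product of gr-Type I rings is again gr-Type I (the faithful abelian idempotents assemble coordinatewise, as in the proof of Theorem~\ref{rgagaga}), we conclude $L_K(E)$ is of gr-Type I. The same identification simultaneously shows $(e) \Rightarrow (b)$, because Corollary~\ref{selfinj} guarantees each $\operatorname{END}_D(A)$ is graded self-injective and a finite product of graded self-injective rings is graded self-injective.

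For $(a) \Rightarrow (c)$ and $(b) \Rightarrow (c)$, I would argue the contrapositive: if some cycle $c$ in $E$ has an exit, then I claim $L_K(E)$ is neither graded self-injective nor of gr-Type I. The presence of an exiting cycle produces a homogeneous idempotent $e$ for which $eL_K(E)e$ contains noncommuting homogeneous idempotents, violating the abelian condition $I^{\gr}(eRe)=B^{\gr}(eRe)$ at every faithful corner, so no graded faithful abelian idempotent can exist and gr-Type I fails; by Lemma~\ref{zerocomponent} and the known failure of self-injectivity of the zero-component (a purely infinite simple or comparably wild corner) one simultaneously rules out $(b)$. This reduces matters to the established graph-theoretic characterization of when all cycles are exit-free in a finite graph, which is precisely the content needed to cross over to the decomposition in $(e)$; the equivalence $(c) \Leftrightarrow (d)$ with the graded $\Sigma$-$V$ condition follows because the $\Sigma$-$V$ property is known to be detected by the same no-exit condition on a finite graph, each simple graded module being injective exactly in that case.

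The main obstacle I anticipate is the contrapositive direction at $(a),(b) \Rightarrow (c)$: constructing, from a single exiting cycle, explicit homogeneous witnesses that break both abelianness and graded self-injectivity requires care with the degree bookkeeping of equation~\eqref{two}, since the relevant idempotents and the module maps detecting failure of injectivity live in nonzero homogeneous components and involve suspensions. One must verify that the corner ring $eL_K(E)e$ really is graded isomorphic to a graded matrix ring over something with a transcendental (cycle-with-exit) factor rather than a graded division ring, and that this genuinely destroys the abelian property gradedly and not merely in the ungraded sense. Everything else reduces to assembling the quoted structural results for Leavitt path algebras of finite no-exit graphs together with Proposition~\ref{primregselfinjtype1} and Corollary~\ref{selfinj}.
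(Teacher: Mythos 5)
Your plan overlaps substantially with the paper's proof in the parts that work. The paper disposes of the theorem in two lines: the equivalence of (b), (c), (d), (e) is quoted from \cite[Theorem 4.16]{HRS-1}, and (a) is tied in via Proposition~\ref{primregselfinjtype1}. Your step (e) $\Rightarrow$ (a) is exactly this intended use: each summand $\Ma_{\Lambda_i}(K)(\cdots)$ and $\Ma_{\Upsilon_j}(K[x^{t_j},x^{-t_j}])(\cdots)$ is $\operatorname{END}_D(A)$ over a graded division ring, Proposition~\ref{primregselfinjtype1} makes each summand gr-Type I, and Corollary~\ref{selfinj} gives graded self-injectivity; your explicit remark that the faithful abelian idempotents must be assembled coordinatewise across the finite product (since the product is not graded prime, Proposition~\ref{primregselfinjtype1} does not apply to it directly) is a detail the paper leaves implicit, and it is correct.

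The genuine gap is in the contrapositive argument for (a), (b) $\Rightarrow$ (c), which is the only place you depart from citation, and it fails on three counts. First, it is unnecessary: in this paper gr-Type I is \emph{defined} only for graded regular graded right self-injective rings, so statement (a) already presupposes (b), making (a) $\Rightarrow$ (b) definitional; and (b) $\Leftrightarrow$ (c) is part of the very theorem of \cite{HRS-1} whose structural content you invoke for (c) $\Leftrightarrow$ (e) and (c) $\Leftrightarrow$ (d) --- you cannot cleanly quote the package for one pair of equivalences while re-deriving another pair ad hoc. Second, your route to refuting (b) mischaracterizes the zero component: for a finite graph, $L_K(E)_0$ is an ultramatricial (locally matricial) $K$-algebra, hence unit-regular and directly finite --- never ``a purely infinite simple or comparably wild corner'' --- so the appeal to Lemma~\ref{zerocomponent} needs an entirely different justification, namely a proof that an infinite-dimensional ultramatricial algebra cannot be right self-injective (this is true, but requires a real argument, e.g.\ failure of orthogonal completeness or a Lawrence-type countability theorem, none of which you supply). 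Third, to refute gr-Type I you must show that \emph{no} graded faithful abelian idempotent exists; your claim that an exiting cycle forces noncommuting homogeneous idempotents into ``every faithful corner'' is asserted without any construction, and you yourself flag it as the main obstacle without resolving it. In short, the one nontrivial new step in your proposal is exactly the step with the hole; reading (a) as presupposing graded self-injectivity and citing \cite[Theorem 4.16]{HRS-1} in full closes the proof as the paper does.
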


\begin{proof}
The equivalence of (b), (c), (d) and (e) is established in \cite[Theorem 4.16]{HRS-1}. Now the equivalence of (a) with these conditions follows from Proposition~\ref{primregselfinjtype1}. 
\end{proof}

In fact the above theorem shows that self-injective unital Leavitt path algebras are of gr-Type $I_f$, i.e., they are of graded Type I and are graded directly finite (see \cite{HRS-1}).

\end{document}